\let\wfs@comment@comment\comment
\let\comment\@undefined
\let\wfs@changes@comment\comment
\let\comment\@undefined
\newcommand\comment{%
    \ifthenelse{\equal{\@currenvir}{comment}}
    {\wfs@comment@comment}
    {\wfs@changes@comment}%
}
\newtheorem{theorem}{Theorem}[section]
\newtheorem{prop}[theorem]{Proposition}
\newtheorem{lem}[theorem]{Lemma}
\newtheorem{corollary}[theorem]{Corollary}
\theoremstyle{definition}
\newtheorem{definition}{Definition}}
\newtheorem{rem}[theorem]{Remark}
\newtheorem{question}[theorem]{Question}
\newcommand{\dsum}{\displaystyle\sum}
\newcommand{\VarP}{\textbf{\textsc{v}}}
\def\min{{\rm min}}
\begin{document}
\title{Optimal plane curves of degree $q-1$ over a finite field}

 \thanks{Walteir de Paula Ferreira is with the Instituto de Matem\'atica, Estat\'istica e Computa\c{c}\~ao Cient\'ifica, Universidade Estadual de Campinas, Campinas, SP 13083-859, Brazil.\\ e-mail: w235426@dac.unicamp.br}
 \thanks{Pietro Speziali is with the Instituto de Matem\'atica, Estat\'istica e Computa\c{c}\~ao Cient\'ifica, Universidade Estadual de Campinas, Campinas, SP 13083-859, Brazil.\\ e-mail: speziali@unicamp.br}

\thanks{{\bf Keywords}: Algebraic curves; Rational points; Sziklai bound; optimal curves; arcs}

\thanks{{\bf Mathematics Subject Classification (2010)}: 114H37, 14H05}

\author{Walteir de Paula Ferreira}
\author{Pietro Speziali}
\date{November 2022}

\begin{abstract}  
    Let $q\geq 5$ be a prime power. In this note,  we prove that if a plane curve $\mathcal{X}$ of degree $q - 1$ defined over $\mathbb{F}_q$ without $\mathbb{F}_q$-linear components attains the Sziklai upper bound $(d-1)q+1 = (q - 1)^2$ for the number of its  $\mathbb{F}_q$-rational points, then $\mathcal{X}$ is projectively equivalent over $\mathbb{F}_q$ to the curve $ \mathcal{C}_{(\alpha,\beta,\gamma)} : \alpha X^{q - 1} + \beta Y^{q - 1} + \gamma Z^{q - 1} = 0$ for some $\alpha, \beta, \gamma \in \mathbb{F}_q^{*}$ such that $\alpha + \beta + \gamma = 0$. This completes the classification of curves that are extremal with respect to the Sziklai bound. 
    Also, since the Sziklai bound is equal to the St\"ohr-Voloch's bound for plane curves of degree $q - 1$, our main result classifies the $\mathbb{F}_q$-Frobenius extremal classical plane curves of degree $q - 1$.
\end{abstract}

\maketitle

\section{Introduction} 

Let $\mathcal{X}$ be a (projective, geometrically irreducible, algebraic) curve defined over a finite field $\mathbb{F}_q$ where $q = p^h$ for some prime $p$ and some positive integer $h$. It is a classical problem to count the number $\text{N}_q(\mathcal{X})$  of $\mathbb{F}_q$-rational points of $\mathcal{X}$. However, since this problem is rather hard to solve, it is often desirable to find good upper bounds for  $\text{N}_q(\mathcal{X})$ depending on some invariants of the curve $\mathcal{X}$. For instance, the famous Hasse-Weil upper bound states that $\text{N}_q(\mathcal{X}) \leq 1 + q + 2g\sqrt{q}$ where $g$ is the genus of $\mathcal{X}$. Note that the same bound holds for any curve defined over $\mathbb{F}_q$ and genus $g$. 
Once we have a bound, it is a natural question to see whether such a bound is sharp or not, and then, it is also natural to try and classify the optimal curves, that is, the curves attaining said bound. In the context of the Hasse-Weil bound, such optimal curves do exist and are called maximal curves. Maximal curves may exist when $q = n^2$ is a square, and it is known that the genus of an $\mathbb{F}_q$-maximal curve is upper bounded by $(n-1)(n-2)/2$. Up to birational equivalence, there is exactly one $\mathbb{F}_q$-maximal curve of genus $(n-1)(n-2)/2$: the Hermitian curve $\mathcal{H}_n$ given by the homogeneous equation
$$
\mathcal{H}_n : Y^nZ + YZ^n = X^{n+1}. 
$$
It is a classical and yet unsolved problem to find the spectrum of the genera of $\mathbb{F}_q$-maximal curves; see \cite{Spectrum}.

Now, let $\mathcal{X}$ be a plane curve of degree $d \geq 2$ without $\mathbb{F}_q$-linear components. In \cite{PeterS}, Sziklai conjectured the following result:  
$\text{N}_q(\mathcal{X}) \leq (d - 1)q + 1$. 
The unique exception to Sziklai's conjecture was found in \cite[Section 3]{Homma1} and it is given by the curve over $\mathbb{F}_4$ with  homogeneous equation
$$
X^4 + Y^4 + Z^4 + X^2Y^2 + Y^2Z^2 + Z^2X^2 + X^2YZ + XY^2Z + XYZ^2 = 0.
$$

The Sziklai bound was later proved by Homma and Kim in a sequence of three papers \cite{Homma1, Homma2, Homma3}. We are interested in curves attaining the Sziklai bound.  Let $\mathcal{X}$ be a nonsingular plane curve that is optimal with respect to the Sziklai upper bound;  then  \cite[Section 5]{Homma2} its degree $d$ must belong to the set $\{2, \sqrt{q} + 1, q - 1, q, q + 1, q + 2\}$. This means that the spectrum of the degrees of optimal Sziklai curves is pretty small, hence, it seems feasible to classify, up to projective equivalence, the curves attaining the Sziklai bound. So far, this  is known in the cases $d = 2, \sqrt{q} + 1, q, q + 1$ or $q + 2$; see \cite[Section 5.1]{James}), \cite[Theorem 8]{HermitianC}), \cite[Main Theorem]{Hommaq}, \cite[Theorem 1.3]{Hommaq11}, \cite{Tallini} and \cite{HKTallini}. For the case $d = q - 1$, a family of optimal curves is  given by the homogeneous equation
$$
C_{(\alpha, \beta, \gamma)} : \alpha X^{q - 1} + \beta Y^{q - 1} + \gamma Z^{q - 1} = 0
$$ 
with $\alpha, \beta, \gamma \in \mathbb{F}_q^{*}$ and $\alpha + \beta + \gamma = 0$. Recently, Homma \cite{Hommaq_1} has stated the following question:
\begin{question}\label{q1}
Are there curves of degree $q - 1$ that attain the Sziklai's upper bound such that are not projectively equivalent over $\mathbb{F}_q$ to a curve of type $\mathcal{C}_{(\alpha, \beta, \gamma)}$?
\end{question}

In the same paper, he gives a positive solution to this problem for $q =4$, since in this case, the Hermitian cubic attains Sziklai's bound but is not projectively equivalent to any $\mathcal{C}_{(\alpha, \beta, \gamma)}$.

In this paper, we give a negative answer to Question \ref{q1} for $q\geq 5$, thus completing the classification of optimal Sziklai curves; see Theorem \ref{MainResult}.  

This paper is organized as follows. 

In Section 2, we give the necessary background, as well as, a brief survey of the existing literature on the Sziklai bound and related topics. 

In Section 3, we will give several technical results that are necessary to prove our classification of curves of degree $q-1$ that are optimal with respect to the Sziklai bound.

 Section 4 is devoted to the proof of  Theorem \ref{MainResult}, which is the main result of our paper. Here, we remark that while our technique applies to all $q\geq 8$, the cases $q = 5, 7$ need to be dealt with by using two different approaches, which are of independent interest. The former needs the knowledge of L-polynomial of curves of genus 3 with small defect \cite{Lauter}, the latter is based on the classification on $(36,6)$-arcs in $\mathbb{P}^2(\mathbb{F}_7)$ \cite{ArcsPG27}.

 Finally, in Section 5, we give a brief discussion regarding topics that are directly linked to (or are possible applications of) our results. More in detail, we show as our main result is related to the $\mathbb{F}_q$-Frobenius classical plane curves of degree $q-1$ attaining the Stöhr-Voloch upper bound. 
 Further, curves attaining the Sziklai upper bound are related to nonsingular hypersurfaces with many $\mathbb{F}_q$-rational points in even-dimensional projective spaces; see \cite{Datta2009, Tironi2022}.

\section{Background and preliminary results}

\subsection{Plane Curves}
    Our notation and terminology are standard. For definitions and basic properties of plane curves, see \cite[Chapter 1-5]{Torres}. In the projective plane $\mathbb{P}^2(\mathbb{F}_q)$, a plane curve $\mathcal{C}$ in $\mathbb{P}^2(\mathbb{F}_q)$ of homogeneous equation $F(X, Y, Z) = 0$, where $F \in \mathbb{F}_q[X, Y, Z]$, is denoted by $\mathcal{C} = \VarP(F)$ and consists of all points $(x: y: z) \in \mathbb{P}^2(\mathbb{F}_q)$ such that $F(x, y, z) = 0$. We regard $\mathcal{C}$ as a curve over a fixed algebraic closure $K := \overline{\mathbb{F}}_q$. By doing so, the points in $(x: y: z) \in \mathbb{P}^2(\mathbb{F}_q)$ such that $F(x, y, z) = 0$ are called $\mathbb{F}_q$-rational points (or simply, rational points) of $\mathcal{C}$ and $\mathcal{C}(\mathbb{F}_q) := \mathbb{P}^2(\mathbb{F}_q) \cap \mathcal{C}$. A plane curve $\mathcal{C} = \VarP(F)$ in $\mathbb{P}^2 := \mathbb{P}^2(K)$, where $F \in K[X, Y, Z]$, is said to be defined over $\mathbb{F}_q$ if there is $\lambda \in K^{*}$ such that $\lambda \cdot F \in \mathbb{F}_q[X, Y, Z]$. A point $P = (x: y: z)$ of a plane curve $\mathcal{C}$ in $\mathbb{P}^2$ is singular if
    $$\dfrac{\partial F}{\partial X}(x, y, z) = \dfrac{\partial F}{\partial Y}(x, y, z) = \dfrac{\partial F}{\partial Z}(x, y, z) = 0.$$
    Otherwise, $P$ is a nonsingular (or a simple) point. The degree of $\mathcal{C}$, denoted by $\text{deg}(\mathcal{C})$,  is $\text{deg}(F)$. A component of $\mathcal{C}$ is a curve $\mathcal{X}$ of homogeneous equation $G = 0$ such that $G|F$. A curve $\mathcal{C}$ is said to be absolutely irreducible if $F$ is irreducible over $K$. A projectivity $\varphi_A: \mathbb{P}^2 \rightarrow \mathbb{P}^2$ is given as follows:
    $$\varphi_A(x: y: z) = A \cdot \left(\begin{array}{c}
         x \\
         y \\
         z
    \end{array}\right)$$
    where $A \in \text{GL}_3(K)$. Two curves $\mathcal{C}$ and $\mathcal{X}$ are said projectively equivalent over $\mathbb{F}_q$, denoted by $\mathcal{C} \simeq_{\text{proj}} \mathcal{X}$, if there is a projectivity $\varphi_A: \mathbb{P}^2 \rightarrow \mathbb{P}^2$ with $A \in \text{GL}_3(\mathbb{F}_q)$ such that $\varphi_A(\mathcal{C}) = \mathcal{X}$. The $q$-Frobenius map $\Psi_q:\mathbb{P}^2 \rightarrow \mathbb{P}^2$ is defined by $\Psi_q(x: y: z) := (x^q: y^q: z^q)$. The dual projective space $\check{\mathbb{P}}^2 = \check{\mathbb{P}}^2(K)$ is the space of all line in $\mathbb{P}^2$ and $\check{\mathbb{P}}^2(\mathbb{F}_q)$ we mean the set of lines defined over $\mathbb{F}_q$ of $\check{\mathbb{P}}^2.$ For a point $P \in \mathbb{P}^2(\mathbb{F}_q),$ define $\check{P}(\mathbb{F}_q) := \{ l \in \check{\mathbb{P}}^2(\mathbb{F}_q)\ |\ P \in l\}.$ 
    
    The proof of the next result can be found in \cite[section 4.5]{Torres}.
    \begin{theorem}[Nother’s “AF + BG” Theorem]\label{Noether}
        Let $\mathcal{F} = \VarP(F)$ and $\mathcal{G} = \VarP(G)$ be two plane curves defined over $\mathbb{F}_q$ with no common components. If $\mathcal{F} \cap \mathcal{G} = \{P_1, ..., P_s\}$ and the multiplicity of $\mathcal{F}$ and $\mathcal{G}$ at each point $P_i$ is equal to $1$, then for all plane curve $\mathcal{X} = \VarP(H)$ defined over $\mathbb{F}_q$ with $\mathcal{F} \cap \mathcal{G} \subseteq \mathcal{X}$ there are $A, B \in \mathbb{F}_q[X, Y, Z]$ such that $H = AF + BG$.
    \end{theorem}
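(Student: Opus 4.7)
The plan is to split the proof into two stages: first establish an $AF+BG$ decomposition over the algebraic closure $K = \overline{\mathbb{F}}_q$ using the classical Max Noether theorem, and then descend the coefficients to $\mathbb{F}_q$ by a linear-algebra argument.

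For stage one, since $\mathcal{F}$ and $\mathcal{G}$ share no component, $F$ and $G$ form a regular sequence in $K[X,Y,Z]$, so the Koszul complex gives a free resolution of $K[X,Y,Z]/(F,G)$. The hypothesis that the multiplicity of each curve at $P_i$ is $1$ means that $P_i$ is a simple point of both $\mathcal{F}$ and $\mathcal{G}$ with distinct tangent lines; therefore in the local ring $\mathcal{O}_{P_i}$ one has $(F_{P_i},G_{P_i}) = \mathfrak{m}_{P_i}$. Since $P_i \in \mathcal{X}$, the germ $H_{P_i}$ lies in $\mathfrak{m}_{P_i} = (F_{P_i},G_{P_i})$, so the standard local Noether criterion holds at every intersection point. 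A Hilbert-series computation using Bezout's theorem (there are exactly $\deg F \cdot \deg G$ transverse intersections) identifies the cokernel of the multiplication map $(A,B) \mapsto AF + BG$ in degree $\deg H$ with $\bigoplus_i \mathcal{O}_{P_i}/(F_{P_i},G_{P_i})$; the local vanishing of $H$ therefore forces $H$ into the image, yielding $H = A'F + B'G$ for some homogeneous $A', B' \in K[X,Y,Z]$.

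Stage two is a Galois-descent argument. Let $a = \deg H - \deg F$ and $b = \deg H - \deg G$. Regarding the coefficients of an indeterminate homogeneous $A$ of degree $a$ and $B$ of degree $b$ as unknowns, the equation $AF+BG = H$ becomes an inhomogeneous linear system $M\mathbf{x} = \mathbf{h}$ whose coefficient matrix $M$ and right-hand side $\mathbf{h}$ have entries in $\mathbb{F}_q$, because $F, G, H$ do. Stage one supplies a solution in $K^{N}$, so the system is consistent; but any consistent $\mathbb{F}_q$-linear system has $\mathbb{F}_q$-rational solutions (Gaussian elimination stays within $\mathbb{F}_q$), producing the required $A, B \in \mathbb{F}_q[X,Y,Z]$.

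The main obstacle is stage one, namely the passage from purely local data at each $P_i$ to a global polynomial identity, which is the content of Max Noether's theorem and requires the Koszul / Hilbert-series machinery sketched above. The descent in stage two is then automatic, uses neither the plane-curve hypothesis nor transversality, and relies only on the elementary fact that a consistent $\mathbb{F}_q$-linear system remains consistent over any extension field.
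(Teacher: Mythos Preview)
The paper does not prove this theorem; immediately before the statement it says ``The proof of the next result can be found in \cite[section 4.5]{Torres}'' and then moves on. So there is no argument in the paper to compare yours against.

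Your two-stage strategy is the standard one and is sound: establish $H\in(F,G)$ over $K=\overline{\mathbb F}_q$ via the classical Max Noether criterion, then descend by linear algebra. Stage two is unimpeachable. For stage one, the Koszul/Hilbert-function sketch is a valid route, but note that the step you really need is that the natural map from the degree-$d$ piece of $K[X,Y,Z]/(F,G)$ into $\bigoplus_i \mathcal O_{P_i}/(F_{P_i},G_{P_i})$ is \emph{injective} for every $d$, not just an isomorphism for $d$ large; this is where the regular-sequence (Cohen--Macaulay) property of $(F,G)$ enters, and it is worth saying so explicitly rather than folding it into ``a Hilbert-series computation''.

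One caveat on hypotheses: you read ``the multiplicity of $\mathcal F$ and $\mathcal G$ at each $P_i$ is $1$'' as transversality, i.e.\ intersection multiplicity $1$, whence $(F_{P_i},G_{P_i})=\mathfrak m_{P_i}$. That is indeed the reading consistent with every use of the theorem in the paper (in each application the authors verify $\#(\mathcal F\cap\mathcal G)=\deg F\cdot\deg G$), but literally the phrase could also mean only that each curve is smooth at $P_i$, which would \emph{not} force distinct tangents and would \emph{not} give $(F,G)=\mathfrak m$ locally. Under the intended transversality hypothesis your argument goes through.
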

    
    Note that Theorem \ref{Noether} gives a method to find all plane curves passing through a given set of points of $\mathbb{P}^2(\mathbb{F}_q)$.

\subsection{Arcs of $\mathbb{P}^2(\mathbb{F}_q)$ and Codes} 

    The following brief account of the theory of  plane arcs and their relationship to linear codes is based on  \cite[Chapter 10 and 17]{CodTheo}, to which we refer the reader for further details. In the projective plane $\mathbb{P}^2(\mathbb{F}_q)$, a $(k, n)$-arc $\mathcal{K}$ is a set of $k$ points such that each line contains at most $n$ point of $\mathcal{K}$ and there is a line that contains exactly $n$ points of $\mathcal{K}$. A $(k, 2)$-arc is simply called an arc. Let $\mathcal{K} \subseteq \mathbb{P}^2(\mathbb{F}_q)$ be a $(k, n)$-arc, for $0 \leq i \leq q + 1$, define $\mathcal{A}_i(\mathcal{K}) := \{ l \in \check{\mathbb{P}}^2(\mathbb{F}_q)\ |\ \#(l \cap \mathcal{K}) = i\}$, $a_i(\mathcal{K}) := \#\mathcal{A}_i(\mathcal{K})$ and $k_0(\mathcal{K}) := \min\{ i\ |\ a_i \neq 0\}$. When there is no possibility of confusion we will denote it simply by $\mathcal{A}_i$, $a_i$ and $k_0$.

    A linear subspace $C$ of $\mathbb{F}_q^n$ 
    of dimension $k$ is called an $[n, k]_q$-code. The elements of a linear code $C$ are called codewords. The weight of a codeword $x = (x_1, ..., x_n) \in C \subseteq \mathbb{F}_q^n$ is the number of nonzero coordinates in $x$, denoted by $wt(x)$. The minimum distance of $\mathcal{C}$ is $\min\{ wt(x) | x \in \mathcal{C}, x \neq 0\}$. If the minimum distance of $\mathcal{C}$ is $d$, then we write that  $\mathcal{C}$ is an $[n, k, d]_q$-code. A generator matrix $G$ of an $[n, k, d]_q$-code $\mathcal{C}$ is a matrix with $k$ rows and $n$ columns whose rows form a basis of $\mathcal{C}$. The code $\mathcal{C}$ is recovered from $G$ by taking all linear combinations of rows. If $\mathcal{C}$ contains $c_i$ codewords of weight $i$, for $i = 1, ..., n$, then the weight enumerator is defined by $$W_{\mathcal{C}}(z) := c_0 + c_1z + c_2z^2 + \cdots + c_nz^n \in \mathbb{Z}[z].$$
    
    Let $\mathcal{C}$ be a linear $[n, 3, d]_q$-code described by a generator matrix $G$. We assume that there is no $0$ column in $G$. We can then consider the columns of $G$ as generators of points in $\mathbb{P}^2(\mathbb{F}_q)$. A linear $[n, 3, d]_q$-code $\mathcal{C}$ is called projective if there is a generator matrix whose columns generate different points in $\mathbb{P}^2(\mathbb{F}_q)$. For a projective $[n, 3, d]_q$-code $\mathcal{C}$ with a generator matrix $G$, the $n$ points in $\mathbb{P}^2(\mathbb{F}_q)$ corresponding to columns of $G$ form an $(n, n - d)$-arc in $\mathbb{P}^2(\mathbb{F}_q)$. For each $i$ in $0,\ldots,n-d$, the number $a_i$ of lines in $\mathcal{A}_i$ is related to the coefficients $c_i$ of the weight enumerator as follows: $(q - 1) \cdot (a_0, \cdots, a_{n - d}) = (c_n, \cdots, c_{d})$. 
\subsection{St\"ohr-Voloch Theory}
In \cite{SVoloch}, Stöhr and Voloch gave a geometric method to obtain upper bounds for the number of rational points of a curve of $\mathbb{P}^n$. Here, we give the necessary background on a particular case of the St\"ohr-Voloch theorem that we will need in Section 4.  

Let $\mathcal{C}$ a plane curve defined over $\mathbb{F}_q$ and $0 = \epsilon_0 < \epsilon_1 = 1 < \epsilon_2$ the order sequence of $\mathcal{C}$. Let $P \in \mathcal{C}$. If $\mathcal{C}$ is nonsingular, then the hermitian $P$-invariants are $j_0(P) = 0, j_1(P) = 1$ and $j_2(P) = \text{I}(P, \mathbb{T}_P(\mathcal{C}) \cap \mathcal{C})$ where $\mathbb{T}_P(\mathcal{C})$ is the tangent line to $\mathcal{C}$ at $P$ and $\text{I}(P, \mathbb{T}_P(\mathcal{C}) \cap \mathcal{C})$ is the intersection multiplicity of $\mathbb{T}_P(\mathcal{C})$ and $\mathcal{C}$ at $P$. Since $\mathcal{C}$ is defined over $\mathbb{F}_q$, then there is a smallest integer $\nu \in \{1, \epsilon_2 \}$ such that
$$\text{det}\left(\begin{array}{ccc}
	x_0^q & x_1^q & x_2^q \\
        x_0 & x_1 & x_2 \\
	D_{\zeta}^{(\nu)} x_0 & D_{\zeta}^{(\nu)} x_1 & D_{\zeta}^{(\nu)} x_2 \\
\end{array}\right) \not\equiv 0$$
where $D_{\zeta}^{(k)}$ is the $k$-th Hasse derivative with respect to a separating variable $\zeta$ of $K(\mathcal{C})|K$, and $x_0, x_1, x_2$ are the coordinate functions on $\mathcal{C} \subseteq \mathbb{P}^2$. The number $\nu$ is called the $q$-Frobenius order of $\mathcal{C}$, and such a curve is called $q$-Frobenius classical if $\nu = 1$. 
The following is a slight rewording of \cite[Theorem 0.1]{SVoloch}.
\begin{theorem}\label{babysv}
Let $\mathcal{C}$ be an irreducible plane curve of degree $d$ defined over a finite field $\mathbb{F}_q$. If $\mathcal{C}$ is $q$-Frobenius classical, then 
$$
{ \rm N}_q(\mathcal{C}) \leq \frac{1}{2}d(d+q-1). 
$$
\end{theorem}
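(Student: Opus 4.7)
The plan is to apply the general Stöhr–Voloch theorem to the tautological linear series $|\mathcal{O}_{\tilde{\mathcal{C}}}(1)|$ on the smooth model $\tilde{\mathcal{C}}$ of $\mathcal{C}$, and then to simplify the resulting estimate via the arithmetic-genus bound for plane curves. Denote by $g$ the geometric genus of $\mathcal{C}$. The central object is the Frobenius-type determinant
\[
\Delta \;=\; \det\begin{pmatrix} x_0^q & x_1^q & x_2^q \\ x_0 & x_1 & x_2 \\ D_\zeta^{(1)} x_0 & D_\zeta^{(1)} x_1 & D_\zeta^{(1)} x_2 \end{pmatrix},
\]
which, after the appropriate normalization, defines a nonzero global section of a line bundle on $\tilde{\mathcal{C}}$. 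The $q$-Frobenius classical hypothesis $\nu = 1$ is precisely the assertion that $\Delta \not\equiv 0$, so its divisor of zeros has a well-defined degree.

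I would then carry out two classical estimates. Globally, a sheaf-theoretic bookkeeping — rows contribute $qd$, $d$, and $d+(2g-2)$ respectively (the Hasse derivatives introduce a twist by the canonical sheaf) — yields
\[
\deg \mathrm{div}_0(\Delta) \;=\; (q+2)\, d + (2g-2).
\]
Locally, at each $P \in \tilde{\mathcal{C}}(\mathbb{F}_q)$ the Frobenius fixes $P$, so the first row of the matrix becomes a scalar multiple of the second row at $P$, forcing $\Delta$ to vanish there. A careful expansion in a local parameter, using the Hermitian invariants $(0,1,j_2(P))$ at $P$ together with $\nu=1$, gives the sharper lower bound
\[
v_P(\Delta) \;\geq\; (j_1(P)-\nu_0) + (j_2(P)-\nu_1) \;=\; 1 + (j_2(P)-1) \;=\; j_2(P) \;\geq\; 2.
\]
Summing over all $\mathbb{F}_q$-rational points and comparing with the global degree yields
\[
2 \cdot \mathrm{N}_q(\mathcal{C}) \;\leq\; \sum_{P \in \tilde{\mathcal{C}}(\mathbb{F}_q)} v_P(\Delta) \;\leq\; (q+2)\, d + (2g-2).
\]

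To finish, I would invoke the arithmetic-genus inequality $g \leq (d-1)(d-2)/2$ valid for any irreducible plane curve of degree $d$, equivalently $2g-2 \leq d^2-3d$, and substitute:
\[
2 \cdot \mathrm{N}_q(\mathcal{C}) \;\leq\; (q+2)\, d + d^2 - 3d \;=\; d(d+q-1),
\]
which is the claimed bound. The main technical obstacle is the local estimate $v_P(\Delta) \geq j_2(P)$ at each rational point: this is the geometric heart of the Stöhr–Voloch argument and rests on a careful Taylor expansion of $\Delta$ in a local parameter at $P$, together with the fact that in characteristic $p \mid q$ the first-row entries $x_i^q$ have all Hasse derivatives of order less than $q$ equal to zero, so the leading cancellations in $\det M$ force $\Delta$ to vanish to order at least $j_2(P)$.
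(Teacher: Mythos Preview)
The paper does not supply its own proof of this statement; it is quoted verbatim as ``a slight rewording of \cite[Theorem 0.1]{SVoloch}'' and then used as a black box. Your sketch is the standard Stöhr--Voloch argument from that reference: form the Frobenius divisor $S = \mathrm{div}_0(\Delta)$, compute $\deg S = \nu(2g-2) + (q+2)d = (2g-2)+(q+2)d$ in the classical case, use the local estimate $v_P(S) \ge 2$ at each $\mathbb{F}_q$-rational place, and then absorb the genus term via $2g-2 \le d(d-3)$. So your approach and the (cited) source agree.

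One small caveat worth tightening: you pass freely between $\mathrm{N}_q(\mathcal{C})$ and $\#\tilde{\mathcal{C}}(\mathbb{F}_q)$. The Stöhr--Voloch machinery bounds the number of $\mathbb{F}_q$-rational \emph{places} on the normalization, whereas the paper's $\mathrm{N}_q(\mathcal{C})$ counts $\mathbb{F}_q$-points of the plane model. For a possibly singular irreducible plane curve these need not coincide, so strictly speaking you should either note that each $\mathbb{F}_q$-point of $\mathcal{C}$ lies below at least one $\mathbb{F}_q$-rational place of $\tilde{\mathcal{C}}$ (which is not automatic at singularities with conjugate branches), or restrict to the nonsingular case. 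In the paper's applications only nonsingular curves arise, so the issue is moot there, but your write-up should flag it.
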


A refined version of this theorem can be obtained if one can gather sufficient information on the number and the weight of $\mathbb{F}_q$-rational inflection points. 
\begin{theorem}\label{SVolochTheo}
    Let $\mathcal{C} \subseteq \mathbb{P}^2$ be an irreducible nonsingular algebraic curve of genus $g$ and degree $d$ defined over $\mathbb{F}_q$. If $\nu$ is the $q$-Frobenius order of $\mathcal{C}$, then
    $$2 \cdot { \rm N}_q(\mathcal{C}) \leq \nu(2g - 2) + (q + 2)d - \sum_{P \in \mathcal{C}} A(P)$$
    where $A(P) = j_2(P) - \nu - 1$ if $P \in \mathcal{C}(\mathbb{F}_q)$ and $A(P) = 0$ otherwise.
\end{theorem}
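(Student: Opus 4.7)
The plan is to associate to the determinant $W$ in the statement an effective divisor $R$ on $\mathcal{C}$, compute its degree, bound each local valuation $v_P(R)$ from below, and then sum these bounds. Although $W$ a priori depends on the separating variable $\zeta$ and on the affine representatives chosen for $(x_0:x_1:x_2)$, the chain rule for Hasse derivatives together with the multilinearity of $3\times 3$ determinants shows that either change multiplies $W$ by a nonzero rational function on $\mathcal{C}$, so the divisor $R$ cut out by $W$ (after a standard normalization to cancel the pole contributions of the $x_i$) is intrinsic.

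Next I would compute $\deg(R)$. The Hasse derivative $D_\zeta^{(\nu)}$ in the third row contributes $\nu$ copies of the canonical divisor, giving $\nu(2g-2)$. The poles of the coordinate functions contribute $qd$ from the Frobenius-twisted first row and $d$ from each of the remaining two rows, for a total pole count of $(q+2)d$. Combining,
$$\deg(R) = \nu(2g-2) + (q+2)d,$$
which is the Frobenius analogue of the classical Wronskian degree formula.

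Third, I would carry out a local analysis at each $P$. Choosing a local uniformizer $t$ and normalizing the projective coordinates so that $P = (1:0:0)$, $x_1 = t + \cdots$, and $x_2 = a\, t^{j_2(P)} + \cdots$ with $a \neq 0$, a direct expansion of $W$ gives $v_P(R) \geq 0$ in general, so $R$ is effective. For $P \in \mathcal{C}(\mathbb{F}_q)$, Frobenius fixes $P$, and row-reducing $W$ by subtracting row 2 from row 1 collapses it to the $2\times 2$ determinant
$$-\det\begin{pmatrix} x_1^q - x_1 & x_2^q - x_2 \\ D_\zeta^{(\nu)} x_1 & D_\zeta^{(\nu)} x_2 \end{pmatrix};$$
using the valuations $v_P(x_1^q - x_1) = 1$ and $v_P(x_2^q - x_2) = j_2(P)$ together with the standard Hasse-derivative estimate $v_P(D_\zeta^{(\nu)} x_j) \geq v_P(x_j) - \nu$, one obtains the enhanced bound
$$v_P(R) \geq j_2(P) - \nu + 1.$$

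Summing the local bounds over $\mathcal{C}(\mathbb{F}_q)$ and using the effectiveness of $R$,
$$\nu(2g-2) + (q+2)d = \deg(R) \geq \sum_{P \in \mathcal{C}(\mathbb{F}_q)} \bigl(j_2(P) - \nu + 1\bigr) = 2\,{\rm N}_q(\mathcal{C}) + \sum_{P \in \mathcal{C}} A(P),$$
since $A(P) = j_2(P) - \nu - 1$ on $\mathcal{C}(\mathbb{F}_q)$ and $A(P) = 0$ elsewhere; rearranging yields the stated bound. The main obstacle is the enhanced local estimate at $\mathbb{F}_q$-rational points, which requires carefully tracking the interaction between the Hasse derivative of order $\nu$ and the Frobenius-induced cancellation in the first two rows, and which uses in an essential way the minimality of $\nu$ in the nonvanishing condition defining it; the degree calculation is also delicate, since the Frobenius-twisted row must be weighted by the correct multiplicity $q$.
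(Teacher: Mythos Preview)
The paper does not prove this statement; it is quoted as background from St\"ohr and Voloch \cite{SVoloch}. Your sketch is precisely the original St\"ohr--Voloch argument specialized to the plane-curve case $n=2$: build the Frobenius divisor $S$ from the determinant, compute $\deg S=(\nu_0+\nu_1)(2g-2)+(q+n)d=\nu(2g-2)+(q+2)d$, establish $v_P(S)\ge (j_1(P)-\nu_0)+(j_2(P)-\nu_1)=j_2(P)-\nu+1$ at each $\mathbb{F}_q$-rational point, and sum. The outline is correct and matches the source the paper cites; in a full write-up the only step requiring more care is the precise normalization defining $S$ (adding the right multiples of $\mathrm{div}(d\zeta)$ and of the pole divisor of the $x_i$) so that effectiveness and the degree formula hold simultaneously.
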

\subsection{Sziklai's upper bound and optimal curves}
    Let $C_d(\mathbb{F}_q)$ be the set of plane curves of degree $d \geq 2$ defined over $\mathbb{F}_q$ without $\mathbb{F}_q$-linear components. For $\mathcal{C} \in C_d(\mathbb{F}_q)$, let $\text{N}_q(\mathcal{C}) := |\mathcal{C}(\mathbb{F}_q)|$. In \cite{PeterS}, Sziklai conjectured the bound $\text{N}_q(\mathcal{C}) \leq (d - 1)q + 1$. Actually, Sziklai's conjecture fails for curves of degree $4$ over $\mathbb{F}_4$, as the plane curve with equation
    \begin{align}\label{Curve14F4}
        X^4 + Y^4 + Z^4 + X^2Y^2 + Y^2Z^2 + Z^2X^2 + X^2YZ + XY^2Z + XYZ^2  & = 0
    \end{align}
	has $14$ points over $\mathbb{F}_4$ (see \cite[section 3]{Homma1}), while Sziklai's bound is equal to $13$. Later on, in a sequence of three papers \cite{Homma1, Homma2, Homma3}, Homma and Kim proved the modified Sziklai’s Conjecture:
	\begin{theorem}[Sziklai's upper bound]
        If $\mathcal{C} \in \mathcal{C}_d(\mathbb{F}_q),$ then
		\begin{align}\label{SziklaiBound}
		    {\rm N}_q(\mathcal{C}) & \leq (d - 1)q + 1,
		\end{align}
		except for the curve over $\mathbb{F}_4$ which is projectively equivalent to curve defined by (\ref{Curve14F4}).
    \end{theorem}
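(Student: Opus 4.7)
The plan is to combine the classical pencil-of-lines counting argument with a refined analysis of how often a line can achieve the maximum intersection with $\mathcal{C}$. First, I would dispose of the trivial case: if $\mathcal{C}(\mathbb{F}_q) = \emptyset$, then ${\rm N}_q(\mathcal{C}) = 0$ and the bound holds. Otherwise, pick $P \in \mathcal{C}(\mathbb{F}_q)$ and look at the pencil $\check{P}(\mathbb{F}_q)$ of the $q+1$ $\mathbb{F}_q$-rational lines through $P$. Since $\mathcal{C}$ contains no $\mathbb{F}_q$-linear component, each such line $\ell$ meets $\mathcal{C}$ in at most $d$ points by B\'ezout, so summing over the pencil (and subtracting the $q$ overcounts of $P$) gives the preliminary estimate ${\rm N}_q(\mathcal{C}) \leq (q+1)(d-1)+1 = (d-1)q + d$.

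Next I would upgrade this to $(d-1)q+1$ by showing that the pencil through $P$ cannot be uniformly ``$d$-secant''. The idea is: after a projective change of coordinates putting $P=(0:0:1)$, write $F(X,Y,Z)=\sum_{i=0}^d F_i(X,Y)Z^{d-i}$ and restrict $F$ to each line $\ell_t : Y = tX$ of the pencil. Each restriction gives a univariate polynomial of degree $\leq d$ in $X$ whose $\mathbb{F}_q$-rational roots parametrize $(\ell_t \cap \mathcal{C})\setminus \{P\}$. A line $\ell_t$ meets $\mathcal{C}$ in exactly $d-1$ further rational points precisely when that polynomial splits completely over $\mathbb{F}_q$. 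I would then sum Galois-theoretic/Newton-identity type invariants of these polynomials as $t$ varies in $\mathbb{F}_q \cup \{\infty\}$, or equivalently count incidences from the dual side, to force at least $d-1$ of the lines to contribute strictly fewer than $d$ rational intersections --- otherwise $F$ would have to factor into $\mathbb{F}_q$-linear factors through $P$, contradicting the hypothesis.

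For the exceptional configuration $(d,q)=(4,4)$, the counting slack disappears and one must instead classify directly: enumerate plane quartics over $\mathbb{F}_4$ with $14$ rational points via the correspondence between projective $[14,3,10]_4$-codes and $(14,4)$-arcs in $\mathbb{P}^2(\mathbb{F}_4)$ recalled in Section 2.2, and verify that exactly one such curve exists, namely the one given by equation \eqref{Curve14F4}.

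The main obstacle is the step from $(d-1)q+d$ down to $(d-1)q+1$: controlling the joint statistics of the intersection polynomials over the whole pencil, uniformly in $d$. This is precisely what forced Homma and Kim to split their argument in \cite{Homma1, Homma2, Homma3} according to the regime of $d$ relative to $q$, using blocking-set techniques for small $d$, linear series considerations and St\"ohr--Voloch-type inequalities for intermediate $d$ (as recalled in Theorems \ref{babysv} and \ref{SVolochTheo}), and ad hoc dual-arc arguments when $d \geq q+1$. I would follow the same partition of cases, with the classification of small $(k,n)$-arcs serving as the base case.
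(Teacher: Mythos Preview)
The paper does not give its own proof of this theorem: it is quoted in Section~2.4 as background, attributed to Homma and Kim \cite{Homma1,Homma2,Homma3}, and used as a black box throughout. So there is nothing to compare your argument \emph{to} beyond the citation itself, and in that respect your final paragraph --- ``I would follow the same partition of cases'' as Homma--Kim --- is exactly what the paper does.

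That said, your middle paragraph overstates what the elementary pencil argument can deliver. The implication ``if too many lines through $P$ are $d$-secant then $F$ acquires an $\mathbb{F}_q$-linear factor through $P$'' is not a theorem, and invoking unspecified ``Galois-theoretic/Newton-identity type invariants'' of the restricted polynomials does not produce one. The gap between the trivial bound $(d-1)q+d$ and the Sziklai bound $(d-1)q+1$ is genuinely the whole content of \cite{Homma1,Homma2,Homma3}; it is not a tightening one obtains by a short local computation at $P$. If you want to present this as a proof rather than a citation, that paragraph should be replaced by an honest summary of the three regimes ($d$ small relative to $q$, $d$ near $q$, $d\geq q+1$) and the distinct tools each requires, rather than suggesting a single uniform mechanism exists.
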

	\begin{rem}\label{RemarkSing}
	   Let $\mathcal{C} \in C_q(\mathbb{F}_q)$ with $(d, q) \neq (4, 4)$. If $N_q(\mathcal{C}) = (q - 1)q + 1$, then $\mathcal{C}$ is absolutely irreducible and any rational point of $\mathcal{C}$ is nonsingular, see \cite[section 2]{Homma2}.     
	\end{rem}
	
	In  \cite[section 5]{Homma2}, Homma and Kim observe that the possible degrees $d$ of a nonsingular curve with $(d - 1)q + 1$ rational points are $q + 2, q + 1, q, q - 1, \sqrt{q} + 1$ and $2$. Also, for each degree $d$ in the list, there is a nonsingular curve of degree $d$ attaining the bound. For $d \neq q-1$, the complete classification of such optimal curves is known; we summarize these results in the following Theorem. 
	
	\begin{theorem}
	    Let $\mathcal{C} \in \text{C}_d(\mathbb{F}_q)$ a nonsingular curve with ${\rm N}_q(\mathcal{C}) = (d - 1)q + 1$.
	        \begin{itemize}
		        \item[(i)] If $d = 2$, then $\mathcal{C} \simeq_{{\rm proj}} \VarP(X^2 + YZ)$ over $\mathbb{F}_q$ {\rm (\cite[Section 5.1]{James})}. 
		        \item[(ii)] If $d = \sqrt{q} + 1$, then $\mathcal{C} \simeq_{{\rm proj}} \VarP(X^{\sqrt{q} + 1} + Y^{\sqrt{q} + 1} + Z^{\sqrt{q} + 1})$ over $\mathbb{F}_q$ when $q > 4$ is a square {\rm (\cite{HermitianC})}.
		        \item[(iii)] If $d = q + 2$, then $\mathcal{C}$ is projectively equivalent over $\mathbb{F}_q$ to the curve of type $\VarP(Y(Y^qZ - YZ^q) + Z(Z^qX - ZX^q) + (aX + bY + cZ)(X^qY - XY^q))$
		        where $t^3 - (ct^q + bt + a)$ is irreducible over $\mathbb{F}_q$ {\rm  (\cite{Tallini} and \cite{HKTallini})}.
		        \item[(iv)] If $d = q + 1$, then $\mathcal{C}$ is projectively equivalent over $\mathbb{F}_q$ to the curve  $$\mathcal{C}_{q + 1} := \VarP(X^{q + 1} - X^2Z^{q - 1} + Y^qZ - YZ^q)$$
		        when $q \geq 5$ or $q = 2$. If $q = 4$, then $\mathcal{C}$ is projectively equivalent over $\mathbb{F}_4$ to either $\mathcal{C}_{5}$ or the curve
		        $$\VarP(\mu G(X, Y, Z) + XYZ(\mu^2(X^2 + Y^2 + Z^2) + XY + YZ + ZX))$$
		        where $G(X, Y, Z) = X^4Y + XY^4 + Y^4Z + YZ^4 + Z^4X + ZX^4$ and $\mu^2 + \mu + 1 = 0$. Moreover, those two curves are not projectively equivalent  to each other over $\mathbb{F}_4$. If $q = 3$, then $\mathcal{C}$ is projectively equivalent over $\mathbb{F}_4$  either to $\mathcal{C}_{4}$ or to the curve
		        $$\VarP(X^3Y - XY^3 + Y^3Z - YZ^3 + Z^3X - ZX^3 + XYZ(X + Y - Z)).$$
		        Moreover, those two curves are not projectively equivalent to each other over $\mathbb{F}_3$ {\rm (\cite{Hommaq11})} .
		        \item[(v)] If $d = q$, then $\mathcal{C} \simeq_{{\rm proj}} \VarP(X^q - XZ^{q - 1} + Y^{q - 1}Z - Z^q)$ over $\mathbb{F}_q$ \rm {(\cite{Hommaq})}. 
		  \end{itemize}
	\end{theorem}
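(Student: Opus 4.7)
The plan is to treat each of the five parts independently, since each rests on separately-published classification work cited in the statement.

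Part (i) is elementary. Any nonsingular plane conic over $\mathbb{F}_q$ has exactly $q+1$ rational points, and all such conics are $\mathbb{F}_q$-projectively equivalent by the classification of nondegenerate ternary quadratic forms (Witt diagonalization for odd characteristic; direct reduction in even characteristic). So it suffices to verify that $\VarP(X^2+YZ)$ is smooth and has $q+1$ rational points, which is immediate by parametrizing $Y=1$.

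For part (ii), observe that the genus-degree formula gives $g(\mathcal{C})=\binom{d-1}{2}=\binom{\sqrt q}{2}$, while the Sziklai bound $(d-1)q+1=\sqrt q\,q+1$ coincides with the Hasse-Weil bound $1+q+2g\sqrt q$ for this genus. Hence $\mathcal{C}$ is $\mathbb{F}_q$-maximal of maximal genus $(n-1)(n-2)/2$ with $n=\sqrt q$, and the classical uniqueness theorem of Rück-Stichtenoth identifies $\mathcal{C}$ with the Hermitian curve $\mathcal{H}_{\sqrt q}$ birationally over $\mathbb{F}_q$. Since two smooth plane curves of the same degree $\geq 3$ are birationally equivalent over $\mathbb{F}_q$ if and only if they are $\mathbb{F}_q$-projectively equivalent (the plane model being the canonical model when $d\geq 3$), this lifts to a projectivity in $\PGL_3(\mathbb{F}_q)$.

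The real work lies in parts (iii)--(v) and is the main obstacle; here the plan is to sketch the unifying tangent-line strategy underlying the cited proofs of Homma-Kim, Tallini, and Hirschfeld-Korchmáros. From the Sziklai extremality count, one deduces that the Hermitian $P$-invariant $j_2(P)$ attains a specific large value at every $\mathbb{F}_q$-rational point $P$, which forces the tangent line $\mathbb{T}_P(\mathcal{C})$ to interact with $\mathcal{C}$ in a highly constrained manner. For instance, in case (v) with $d=q$, every rational tangent line meets $\mathcal{C}$ only at its point of tangency with multiplicity $q$, so the rational tangent pencil forms a dual arc. Combining this with Noether's AF+BG theorem (Theorem \ref{Noether}) applied to three suitably chosen tangent lines recovers enough of the defining polynomial to pin down a canonical model up to $\PGL_3(\mathbb{F}_q)$.

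For part (iii), the auxiliary cubic $t^3-(ct^q+bt+a)$ arises naturally as the resolvent governing the Frobenius orbits of the residual intersection lines through a generic rational point, and its irreducibility is equivalent to both the nonsingularity of $\mathcal{C}$ and the attainment of the Sziklai bound. For part (iv), the case $q\geq 5$ is handled by the direct normal-form argument sketched above, but the exceptions for $q\in\{3,4\}$ require a separate, more delicate check ruling out the inequivalent alternate optimal models listed in the statement, since the small ground fields admit an additional plane curve of degree $q+1$ up to $\mathbb{F}_q$-projectivity; this small-field analysis is the most intricate portion of the whole classification and is carried out in \cite{Hommaq11} by a direct enumeration of $(q^2+1,2)$-dual arcs with the required tangent pattern.
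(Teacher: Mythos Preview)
The paper gives no proof of this theorem; it is presented purely as a survey of known results, each part attributed to the cited reference and nothing more. So there is no argument in the paper to compare your proposal against.

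Your sketch for (i) is fine. For (ii), the route via R\"uck--Stichtenoth plus ``birational $\Rightarrow$ projective equivalence for smooth plane curves'' is a legitimate alternative to the combinatorial characterization actually given in \cite{HermitianC}, but your stated justification for the lifting step is wrong: the plane model of a smooth curve of degree $d$ is the canonical model only when $d=4$ (genus $3$), not for all $d\geq 3$. The correct reason, valid for $d\geq 4$ (which covers the Hermitian case since $q>4$ forces $d=\sqrt{q}+1\geq 4$), is that the linear series cut by lines is the \emph{unique} $g^2_d$ on a smooth plane curve of that degree, so any $\mathbb{F}_q$-isomorphism between two such curves carries one hyperplane class to the other and hence is induced by an element of $\PGL_3(\mathbb{F}_q)$.

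Your outlines for (iii)--(v) are heuristic at best: phrases like ``the auxiliary cubic arises naturally as the resolvent governing the Frobenius orbits'' or ``recovers enough of the defining polynomial'' do not constitute a proof and would need substantial work to become one. But since the paper itself offers only citations here, this is not a gap relative to the paper; it is simply an incomplete independent attempt.
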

	For $d = q - 1$, as was mentioned by Sziklai in \cite{PeterS}, the curve
        $$\mathcal{C}_{(\alpha, \beta, \gamma)} := \VarP(\alpha X^{q - 1} + \beta Y^{q - 1} + \gamma Z^{q - 1})$$
        with $\alpha, \beta, \gamma \in \mathbb{F}_q^{*}$ and $\alpha + \beta + \gamma = 0$ has $(q - 1)^2$ rational points. This curve is nonsingular and the set of its $\mathbb{F}_q$-rational points is
	    $$\mathcal{C}_{(\alpha, \beta, \gamma)}(\mathbb{F}_q) = \mathbb{P}^2(\mathbb{F}_q) \backslash \{X = 0\} \cup \{ Y = 0\} \cup \{ Z = 0\}.$$
	    Recently, Homma \cite{Hommaq_1}, has studied the number of projective equivalence classes over $\mathbb{F}_q$ in this family of curves. More precisely, he proves the following Theorem.
	    
	    \begin{theorem}\cite[Theorem 1.3]{Hommaq_1}
	        The number $\nu_q$ of projective equivalence classes over $\mathbb{F}_q$ in the family of curves $\{ \mathcal{C}_{(\alpha, \beta, \gamma)}\ |\ \alpha, \beta, \gamma \in \mathbb{F}_q^{*},\ \alpha + \beta + \gamma = 0\}$
	        is as follows:
	        \begin{itemize}
	            \item[(i)] Suppose that the characteristic of $\mathbb{F}_q$ is neither $2$ or $3$.
	                \begin{itemize}
	                    \item[(1)] If $q \equiv 2 \mod 3$, then $\nu_q = (q + 1)/6$.
	                    \item[(2)] If $q \equiv 1 \mod 3$, then $\nu_q = (q + 5)/6$.
	                \end{itemize}
	            \item[(ii)] Suppose that $q$ is a power of $3$. Then $\nu_q = (q + 3)/6$.
	            \item[(iii)] Suppose that $q$ is a power of $2$:
	                \begin{itemize}
	                    \item[(1)] If $q \equiv 2 \mod 3$, then $\nu_q = (q - 2)/6$.
	                    \item[(2)] If $q \equiv 1 \mod 3$, then $\nu_q = (q + 2)/6$.
	                \end{itemize}
	        \end{itemize}
	    \end{theorem}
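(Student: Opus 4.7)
The plan is to show that projective equivalence within the family $\{\mathcal{C}_{(\alpha,\beta,\gamma)}\}$ reduces to an $S_3$-action on a one-parameter family of coefficient triples, and then to apply Burnside's lemma.

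First I would verify a rigidity statement: the only $\mathbb{F}_q$-lines entirely disjoint from $\mathcal{C}_{(\alpha,\beta,\gamma)}(\mathbb{F}_q)$ are the three coordinate lines $L_X=\VarP(X)$, $L_Y=\VarP(Y)$, $L_Z=\VarP(Z)$. This is immediate from the identity $\mathcal{C}_{(\alpha,\beta,\gamma)}(\mathbb{F}_q)=\mathbb{P}^2(\mathbb{F}_q)\setminus(L_X\cup L_Y\cup L_Z)$ recorded just before the statement, combined with the fact that any other $\mathbb{F}_q$-line meets $L_X\cup L_Y\cup L_Z$ in at most three points while containing $q+1\geq 4$ rational points. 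Consequently, any projectivity $\varphi_A$ with $A\in \GL_3(\mathbb{F}_q)$ sending $\mathcal{C}_{(\alpha,\beta,\gamma)}$ to $\mathcal{C}_{(\alpha',\beta',\gamma')}$ must permute the three coordinate lines, so that, modulo scalars, $A=PD$ for some permutation matrix $P$ and some diagonal $D\in\GL_3(\mathbb{F}_q)$. The diagonal entries of $D$ lie in $\mathbb{F}_q^*$ and thus satisfy $a^{q-1}=1$, so $D$ acts trivially on the coefficient triple, while $P$ permutes $(\alpha,\beta,\gamma)$ via the corresponding element of $S_3$.

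The next step is to parameterize the equivalence classes of admissible triples under $\mathbb{F}_q^*$-scaling by a single variable: each class has a unique representative $(1,\beta,-1-\beta)$ with $\beta\in\mathbb{F}_q\setminus\{0,-1\}$, giving $q-2$ classes in total. The induced $S_3$-action becomes the M\"obius action on $\mathbb{P}^1$ generated by $\beta\mapsto 1/\beta$ (swap of the first two coordinates) and $\beta\mapsto -1-\beta$ (swap of the last two), whose six elements are $\{\beta,\,1/\beta,\,-1-\beta,\,-1/(1+\beta),\,-(1+\beta)/\beta,\,-\beta/(1+\beta)\}$. Burnside's lemma then gives $\nu_q=\tfrac{1}{6}\sum_{\sigma\in S_3}\#\{\beta\in\mathbb{F}_q\setminus\{0,-1\}:\sigma(\beta)=\beta\}$.

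Finally, I would compute these fixed-point counts. The three transpositions give the equations $\beta^2=1$, $2\beta=-1$, $\beta^2+2\beta=0$, with candidate fixed points in $\{\pm 1,-1/2,0,-2\}$; the two three-cycles share the quadratic $\beta^2+\beta+1=0$. In characteristic neither $2$ nor $3$ the three transposition fixed points $\{1,-2,-1/2\}$ are distinct and admissible, while the three-cycles contribute $0$ or $2$ roots according as $-3$ is a non-square or a square in $\mathbb{F}_q$, i.e.\ according to $q\bmod 3$; summing and dividing by six yields $(q+1)/6$ and $(q+5)/6$. In characteristic $3$ the quadratic $\beta^2+\beta+1$ becomes $(\beta-1)^2$ and all five nontrivial $\sigma$ collapse to the single fixed point $\beta=1$, giving $(q+3)/6$. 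In characteristic $2$ every transposition loses its fixed point (each candidate becomes $0$ or the excluded $-1$), while the three-cycles still contribute $0$ or $2$ roots, giving $(q-2)/6$ or $(q+2)/6$. The main (if mild) obstacle is the small-characteristic bookkeeping: one has to check in each case which candidate fixed points are admissible and whether degeneracies force several elements of $S_3$ to share the same fixed point, as happens precisely in characteristics $2$ and $3$.
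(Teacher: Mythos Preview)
The paper does not give its own proof of this theorem; it is quoted as background from \cite{Hommaq_1} and stated without argument. There is therefore nothing in the present paper to compare your approach against.

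That said, your sketch is correct. The rigidity step is sound: since every $\mathcal{C}_{(\alpha,\beta,\gamma)}$ has $\mathbb{F}_q$-rational locus exactly $\mathbb{P}^2(\mathbb{F}_q)\setminus(L_X\cup L_Y\cup L_Z)$, the three coordinate lines are the unique $\mathbb{F}_q$-lines disjoint from it (any other line meets $L_X\cup L_Y\cup L_Z$ in at most three points while having $q+1\ge 4$ rational points), so an $\mathbb{F}_q$-projectivity between two such curves must permute $\{L_X,L_Y,L_Z\}$ and hence be monomial. The diagonal factor kills the $(q-1)$st powers, leaving only the $S_3$-permutation of the coefficients together with the global $\mathbb{F}_q^*$-scaling of the equation. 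Your parameterization by $\beta\in\mathbb{F}_q\setminus\{0,-1\}$ and the six M\"obius transformations are correct, and the Burnside count goes through exactly as you describe: in characteristic $\neq 2,3$ each transposition fixes one admissible $\beta$ (namely $1$, $-1/2$, $-2$) and the two $3$-cycles contribute $0$ or $2$ according to whether $\beta^2+\beta+1$ splits, i.e.\ according to $q\bmod 3$; in characteristic $3$ all five nontrivial elements fix only $\beta=1$; in characteristic $2$ the transpositions fix nothing admissible. Summing gives the listed values of $\nu_q$ in every case. One small remark: the distinctness of $\{1,-2,-1/2\}$ that you mention is not actually needed for Burnside, only the count of fixed points per group element, which you have right.
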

	    
	    In the same paper, the curves of degree $3$ over $\mathbb{F}_4$ are classified.
	    
	    \begin{theorem}\cite[Theorem 3.1]{Hommaq_1}\label{TheoremCaseQ4}
	        Let $\mathcal{C}$ be a nonsingular plane curve of degree $3$ over $\mathbb{F}_4$. If $\text{N}_4(\mathcal{C}) = 9$, then $\mathcal{C}$ is either
	        \begin{itemize}
	            \item[(i)] the Hermitian cubic $\mathcal{H}_3$ given by $\VarP(X^3+Y^3+Z^3)$ or
	            \item[(ii)] projectively equivalent to the curve $\mathcal{C}_{\alpha}$ given by $\VarP(X^3 + \alpha Y^3 + \alpha^2 Z^3)$ where $\mathbb{F}_4 = \{0, 1, \alpha, \alpha^2\}$.
	        \end{itemize}
	    \end{theorem}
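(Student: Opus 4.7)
The plan is to exploit the arithmetic of $\mathcal{C}$ as an $\mathbb{F}_4$-maximal elliptic curve. Since $\mathcal{C}$ is a nonsingular plane cubic of genus one with ${\rm N}_4(\mathcal{C})=9$, it attains the Hasse-Weil bound and so is $\mathbb{F}_4$-maximal, supersingular, with Frobenius $\Phi_4$ acting as multiplication by $-2$ on $\mathcal{C}$ (viewed as its own Jacobian via any chosen rational point). In particular $\mathcal{C}(\mathbb{F}_4)=\mathcal{C}[3]\cong(\mathbb{Z}/3\mathbb{Z})^2$. A short group-law computation shows that the property of being an inflection point is shared by all nine rational points or by none, and that three rational points $P_1,P_2,P_3$ are collinear in $\mathbb{P}^2$ exactly when $P_1+P_2+P_3=T$ for a fixed element $T\in\mathcal{C}[3]$. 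Counting unordered triples in $(\mathbb{Z}/3\mathbb{Z})^2$ summing to $T$ yields $12$ if $T=0$ and $9$ if $T\neq 0$, so the number of $\mathbb{F}_4$-rational trisecants of $\mathcal{C}(\mathbb{F}_4)$ is a projective invariant which already separates the two target curves.

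If $T=0$, every rational point is a flex, and the nine rational tangent lines split into three pencils of three concurrent lines, centred at three rational pencil centres forming a triangle. Sending these centres to the coordinate vertices by an $\mathbb{F}_4$-projectivity and invoking the tangent conditions, a standard coefficient analysis reduces the equation of $\mathcal{C}$ to a member of the Hesse pencil $X^3+Y^3+Z^3+\mu XYZ=0$. The singular members of this pencil over $\mathbb{F}_4$ are precisely those with $\mu^3=1$, together with $\mu=\infty$; hence only $\mu=0$ yields a nonsingular curve, and $\mathcal{C}\simeq_{\mathrm{proj}}\mathcal{H}_3$.

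If $T\neq 0$, no rational point is a flex, and the nine rational trisecants likewise split into three pencils of three concurrent lines with three rational pencil centres forming a triangle. Sending these centres to the coordinate vertices places the nine rational points in the affine chart $\mathbb{F}_4^{*}\times\mathbb{F}_4^{*}$, and each of the three coordinate lines meets $\mathcal{C}$ in a Frobenius orbit of three non-rational conjugate points. Combining this orbit structure with the triviality of the cube map on $\mathbb{F}_4^{*}$ eliminates the cross-term monomials and forces the equation of $\mathcal{C}$ into the diagonal form $\alpha X^3+\beta Y^3+\gamma Z^3=0$ with $\alpha\beta\gamma\neq 0$. Substituting $(1:1:1)\in\mathcal{C}$ yields $\alpha+\beta+\gamma=0$, and a final rescaling normalizes $(\alpha,\beta,\gamma)$ to $(1,\alpha,\alpha^2)$, giving $\mathcal{C}\simeq_{\mathrm{proj}}\mathcal{C}_\alpha$. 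The inequivalence of $\mathcal{H}_3$ and $\mathcal{C}_\alpha$ over $\mathbb{F}_4$ is then immediate from the $12$-versus-$9$ count of rational trisecants.

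The principal obstacle is the elimination of cross-term monomials in the case $T\neq 0$: because the Hessian criterion for inflections is vacuous in characteristic two, the absence of rational flexes must be detected through direct intersection-multiplicity computations on tangent lines, and this arithmetic—together with the trivial cubing on $\mathbb{F}_4^{*}$—is what makes the diagonal reduction go through.
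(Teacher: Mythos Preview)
The paper does not supply its own proof of this statement: Theorem~\ref{TheoremCaseQ4} is quoted from \cite[Theorem~3.1]{Hommaq_1} as background for the case $q=4$, and no argument is reproduced. There is therefore no in-paper proof to compare your attempt against.

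Assessing your proposal on its own merits: the global strategy is sound. The identification of $\mathcal{C}$ as an $\mathbb{F}_4$-maximal elliptic curve with $\Phi_4=-2$, hence $\mathcal{C}(\mathbb{F}_4)=\mathcal{C}[3]\cong(\mathbb{Z}/3\mathbb{Z})^2$, is correct; the collinearity condition $P_1+P_2+P_3=T$ for a fixed $T\in\mathcal{C}[3]$ and the resulting dichotomy ``all nine rational points are flexes'' ($T=0$) versus ``none are'' ($T\neq0$) are genuine projective invariants; and the $12$-versus-$9$ trisecant count does separate $\mathcal{H}_3$ from $\mathcal{C}_\alpha$. Your Hesse-pencil endgame in the $T=0$ case is also right: over $\mathbb{F}_4$ the only nonsingular member of $X^3+Y^3+Z^3+\mu XYZ=0$ is $\mu=0$.

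The genuine gap is the ``three pencils'' assertion, which you use in both cases to choose coordinates but never justify. For $T=0$ it can be rescued via classical Hesse-configuration facts (the twelve trisecants through the nine flexes decompose into four triangles, and the tangents at three collinear flexes concur), but this deserves an explicit statement and a characteristic-$2$ verification rather than a bare claim. For $T\neq0$ the gap is more serious: the nine trisecants contribute $18$ incidences with the $12$ non-curve points of $\mathbb{P}^2(\mathbb{F}_4)$, and the pattern $(3,3,3,1,\ldots,1)$ you need is only one of several numerically consistent distributions. You must argue that a non-curve point lying on at least two trisecants in fact lies on three (an instance of the interpolation idea behind Proposition~\ref{PropQ} with $q=4$, or a direct Cayley--Bacharach argument), and then that the three resulting centres are non-collinear. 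Without this, the coordinate change ``placing the nine rational points in $\mathbb{F}_4^{*}\times\mathbb{F}_4^{*}$'' is unjustified, and the subsequent diagonalisation has no footing. Once that step is secured, note that you can replace the orbit/cross-term discussion entirely: the condition $\mathcal{C}(\mathbb{F}_4)=\mathbb{P}^2(\mathbb{F}_4)\setminus\{XYZ=0\}$ already forces the diagonal form by Proposition~\ref{Z(X)XYZ}.
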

	    \begin{rem}
	        It can be shown that the Hermitian cubic $\mathcal{H}_3$ and the curve $\mathcal{C}_{\alpha}$ are birationally equivalent over $\mathbb{F}_4$. Also, they are projectively equivalent over $\mathbb{F}_{2^6}$ {\rm (see, \cite[section 4]{Hommaq_1})}.
	    \end{rem}

\section{Preliminary results}

    In this section, we give several technical results that are necessary to prove our main result Theorem \ref{MainResult}. First of all, not that, by Theorem \ref{TheoremCaseQ4}, we may assume $q \geq 5$. We begin by proving the following Proposition. 
    \begin{prop}\cite[Proposition 2.1]{Hommaq_1}\label{Z(X)XYZ}
        Let $\mathcal{C}$ be a (possibly reducible) plane curve over $\mathbb{F}_q$ of degree $q - 1$. Then $\mathcal{C} \in \{ \mathcal{C}_{(\alpha, \beta, \gamma)}\ |\ \alpha, \beta, \gamma \in \mathbb{F}_q^{*},\ \alpha + \beta + \gamma = 0\}$ if and only if $$\mathcal{C}(\mathbb{F}_q) = \mathbb{P}^2(\mathbb{F}_q) \backslash (\VarP(X) \cup \VarP(Y) \cup \VarP(Z)).$$
    \end{prop}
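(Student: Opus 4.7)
The strategy splits along the biconditional. For the forward direction, assume $\mathcal{C}=\mathcal{C}_{(\alpha,\beta,\gamma)}$ with $\alpha,\beta,\gamma\in\mathbb{F}_q^*$ and $\alpha+\beta+\gamma=0$. A point $(x:y:z)\in\mathbb{P}^2(\mathbb{F}_q)$ with $xyz\ne 0$ satisfies $x^{q-1}=y^{q-1}=z^{q-1}=1$, hence $F(x,y,z)=\alpha+\beta+\gamma=0$, placing every off-axis rational point on $\mathcal{C}$. Conversely, a rational point of $\mathcal{C}$ with $x=0$ would force $\beta y^{q-1}+\gamma z^{q-1}=0$; the sub-case $y=0$ forces $z=0$ since $\gamma\ne 0$ (impossible in $\mathbb{P}^2$), while $y,z\in\mathbb{F}_q^*$ yields $\beta+\gamma=0$, hence $\alpha=0$, a contradiction. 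Symmetric arguments for $y=0$ and $z=0$ give $\mathcal{C}(\mathbb{F}_q)=\mathbb{P}^2(\mathbb{F}_q)\setminus(\VarP(X)\cup\VarP(Y)\cup\VarP(Z))$, a set of cardinality $(q-1)^2$ by inclusion-exclusion.

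For the converse, I would apply Noether's AF+BG theorem (Theorem \ref{Noether}). Since $q\ge 5$, the triples $(\alpha,\beta,\gamma)\in(\mathbb{F}_q^*)^3$ with $\alpha+\beta+\gamma=0$ form at least $q-2\ge 3$ projective equivalence classes, so one can pick two distinct curves $\mathcal{F}_1=\mathcal{C}_{(\alpha_1,\beta_1,\gamma_1)}$ and $\mathcal{F}_2=\mathcal{C}_{(\alpha_2,\beta_2,\gamma_2)}$ in the family. Each $\mathcal{F}_i$ is smooth, since its partial derivatives $(q-1)\alpha_i X^{q-2}$, etc., cannot vanish simultaneously in $\mathbb{P}^2$ (note $(q-1)\not\equiv 0\pmod p$); smoothness implies geometric irreducibility, and being distinct they share no common component. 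B\'ezout then gives $\#(\mathcal{F}_1\cap\mathcal{F}_2)\le(q-1)^2$, counted with multiplicity. By the forward direction both $\mathcal{F}_i$ contain the $(q-1)^2$ points of $\mathbb{P}^2(\mathbb{F}_q)\setminus(\VarP(X)\cup\VarP(Y)\cup\VarP(Z))$, so these exhaust $\mathcal{F}_1\cap\mathcal{F}_2$ and each is simple on both curves (automatic from smoothness). By hypothesis $\mathcal{C}(\mathbb{F}_q)\supseteq\mathcal{F}_1\cap\mathcal{F}_2$, so Theorem \ref{Noether} yields $F=AF_1+BF_2$ for some $A,B\in\mathbb{F}_q[X,Y,Z]$, where $F,F_1,F_2$ are defining polynomials of $\mathcal{C},\mathcal{F}_1,\mathcal{F}_2$.

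Since $F,F_1,F_2$ are homogeneous of the same degree $q-1$, taking the degree-$(q-1)$ homogeneous parts of both sides forces $A,B$ to be scalars $\lambda,\mu\in\mathbb{F}_q$. Expanding,
$$F=(\lambda\alpha_1+\mu\alpha_2)X^{q-1}+(\lambda\beta_1+\mu\beta_2)Y^{q-1}+(\lambda\gamma_1+\mu\gamma_2)Z^{q-1}=:\alpha X^{q-1}+\beta Y^{q-1}+\gamma Z^{q-1},$$
and linearity of the relations $\alpha_i+\beta_i+\gamma_i=0$ gives $\alpha+\beta+\gamma=0$. Finally, the hypothesis excludes the coordinate vertices $(1:0:0),(0:1:0),(0:0:1)$ from $\mathcal{C}$, and evaluating $F$ at these points produces $\alpha,\beta,\gamma$ respectively, so each is nonzero. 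The only delicate points in the plan are the existence of two distinct curves $\mathcal{F}_1,\mathcal{F}_2$ in the family (which is where the hypothesis $q\ge 5$ enters) and the verification that the intersection $\mathcal{F}_1\cap\mathcal{F}_2$ meets the hypotheses of Theorem \ref{Noether}; the rest is a routine combination of B\'ezout and coefficient comparison.
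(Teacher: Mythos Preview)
The paper does not supply its own proof of this proposition: despite the sentence ``We begin by proving the following Proposition'', the statement is simply cited from \cite[Proposition 2.1]{Hommaq_1} and the text moves on immediately. So there is nothing to compare against directly; one can only evaluate your argument on its own merits.

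Your proof is correct, and in fact it is very much in the spirit of the paper, since the key tool---Noether's ``$AF+BG$'' Theorem (Theorem~\ref{Noether})---is exactly what the authors later use in Proposition~\ref{PropQ} and Proposition~\ref{q=7}. A few small remarks:
\begin{itemize}
    \item Your count of curves in the family is fine, though the phrase ``projective equivalence classes'' is a bit misleading; what you mean (and need) is simply that there are $q-2$ distinct curves $\mathcal{C}_{(\alpha,\beta,\gamma)}$, so two distinct ones exist already for $q\ge 4$. The paper's standing assumption $q\ge 5$ comfortably covers this.
    \item In the step ``taking the degree-$(q-1)$ homogeneous parts of both sides forces $A,B$ to be scalars'', what you really obtain is that the degree-$(q-1)$ component of $AF_1+BF_2$ equals $A_0F_1+B_0F_2$, where $A_0,B_0$ are the constant terms of $A,B$; since $F$ is homogeneous of degree $q-1$, this already gives $F=A_0F_1+B_0F_2$, which is all you need. (Higher-degree cancellation between $AF_1$ and $BF_2$ is irrelevant once you pass to the homogeneous component.) This is a phrasing issue, not a mathematical gap.
    \item Your verification that $\alpha,\beta,\gamma\ne 0$ via the coordinate vertices is the right way to close the argument, and uses the full strength of the hypothesis that $\mathcal{C}(\mathbb{F}_q)$ is \emph{exactly} the off-axis set, not merely contained in it.
\end{itemize}
Overall the argument is sound and would serve perfectly well as an in-paper proof of the proposition.
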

    Fix a curve $\mathcal{X} \in C_{q - 1}(\mathbb{F}_q)$ with $\text{N}_q(\mathcal{X}) = (q - 1)^2$. Let $Z(\mathcal{X}) := \mathbb{P}^2(\mathbb{F}_q) \backslash \mathcal{X}(\mathbb{F}_q)$. By Proposition \ref{Z(X)XYZ}, if $Z(\mathcal{X}) = (\VarP(X) \cup \VarP(Y) \cup \VarP(Z))(\mathbb{F}_q)$ then $\mathcal{X}= \mathcal{C}_{(\alpha, \beta, \gamma)}$ for some $\alpha, \beta, \gamma \in \mathbb{F}_q^{\times}$ such that $\alpha + \beta + \gamma = 0$. Since the general projective linear  group ${\rm PGL}(2,q)$ $3$-transitively on the set of lines of $\mathbb{P}^2(\mathbb{F}_q)$ and $\#Z(\mathcal{X}) = 3q$, if there are three lines $l_1, l_2, l_3 \in \check{\mathbb{P}}^2(\mathbb{F}_q)$ such that $Z(\mathcal{X}) = (l_1 \cup l_2 \cup l_3)(\mathbb{F}_q)$, then $l_1, l_2, l_3$ are not concurrent and we can choose coordinates $X, Y, Z$ of $\mathbb{P}^2$  such that $l_1 = \VarP(X), l_2 = \VarP(Y)$ e $l_{3} = \VarP(Z)$. This means that, in order to prove our main result, it is enough to show the existence of such three lines. We start by proving the following lemma. 
    
    \begin{lem}\label{LemaArc}
        The set $\mathcal{X}(\mathbb{F}_q) \subseteq \mathbb{P}^2(\mathbb{F}_q)$ is a $((q - 1)^2, q - 1)$-arc.
    \end{lem}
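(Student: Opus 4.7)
The claim has two parts: the upper bound (every $\mathbb{F}_q$-rational line meets $\mathcal{X}(\mathbb{F}_q)$ in at most $q-1$ points) together with the sharpness (some line meets it in exactly $q-1$ points). The size of $\mathcal{X}(\mathbb{F}_q)$ is $(q-1)^2$ by hypothesis.

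For the upper bound I would use B\'ezout. Let $\ell \in \check{\mathbb{P}}^2(\mathbb{F}_q)$ be any $\mathbb{F}_q$-rational line. Since $\mathcal{X}$ has no $\mathbb{F}_q$-linear component, $\ell$ is not a component of $\mathcal{X}$, so the (set-theoretic) intersection $\ell\cap\mathcal{X}$ has at most $\deg\mathcal{X}\cdot\deg\ell=q-1$ points. In particular $\#(\ell\cap\mathcal{X}(\mathbb{F}_q))\le q-1$.

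For sharpness I would use a double-count of incidences between $\mathcal{X}(\mathbb{F}_q)$ and the $\mathbb{F}_q$-rational lines. For $0\le i\le q-1$ let $a_i$ denote the number of $\mathbb{F}_q$-rational lines meeting $\mathcal{X}(\mathbb{F}_q)$ in exactly $i$ points; by the upper bound these are all the $a_i$ that can be nonzero. Counting the total number of $\mathbb{F}_q$-rational lines and, for each point of $\mathcal{X}(\mathbb{F}_q)$, the $q+1$ lines through it, gives
\begin{equation*}
\sum_{i=0}^{q-1} a_i \;=\; q^2+q+1,\qquad \sum_{i=0}^{q-1} i\,a_i \;=\; (q+1)(q-1)^2.
\end{equation*}
Assume for contradiction that $a_{q-1}=0$. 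Then $\sum_i i\,a_i\le (q-2)\sum_i a_i=(q-2)(q^2+q+1)=q^3-q^2-q-2$, while $(q+1)(q-1)^2=q^3-q^2-q+1$, which is strictly larger. This contradiction forces $a_{q-1}\ge 1$, so some $\mathbb{F}_q$-rational line contains exactly $q-1$ points of $\mathcal{X}(\mathbb{F}_q)$.

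Combining the two parts with the assumption $\#\mathcal{X}(\mathbb{F}_q)=(q-1)^2$ gives the definition of a $((q-1)^2,q-1)$-arc. There is no real obstacle here: the main input is that ``no $\mathbb{F}_q$-linear component'' is exactly the hypothesis needed to apply B\'ezout to arbitrary rational lines, and the numerical inequality in the counting step is comfortable (the gap is $3$, independent of $q$), so the argument works uniformly for all $q\ge 5$.
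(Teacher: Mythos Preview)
Your proof is correct and follows essentially the same approach as the paper: B\'ezout for the upper bound, then a counting argument to force some line to be a $(q-1)$-secant. The only cosmetic difference is that the paper counts locally through a single point $P\in\mathcal{X}(\mathbb{F}_q)$ (getting $1+(q+1)(t-1)\ge(q-1)^2$, hence $t>q-2$), whereas you count globally via the incidence identities $\sum a_i=q^2+q+1$ and $\sum i\,a_i=(q+1)(q-1)^2$; both yield the same conclusion with the same margin.
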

    \begin{proof}
        Since $\text{deg}(\mathcal{X}) = q - 1$, then $\#(l \cap \mathcal{X}(\mathbb{F}_q)) \leq q - 1$ for every line $l \in \check{\mathbb{P}}^2(\mathbb{F}_q)$. Let 
        $t := \max\{ \#(l \cap \mathcal{X}(\mathbb{F}_q))\ |\ l \in \check{\mathbb{P}}^2(\mathbb{F}_q)  \} \leq q - 1.$
        If $P \in \mathcal{X}(\mathbb{F}_q)$ then each line in $\check{P}(\mathbb{F}_q)$ contains at most $t$ points of $\mathcal{X}(\mathbb{F}_q)$. Since $\#\check{P}(\mathbb{F}_q) = q + 1$ then $1 + (q + 1)(t - 1) \geq (q - 1)^2$. Hence, 
        $$q - 1 \geq t \geq \dfrac{q(q - 2)}{q + 1} + 1 = q - 2 + \dfrac{3}{q + 1} > q - 2.$$
        This implies that $t = q - 1$. Therefore, $\mathcal{X}(\mathbb{F}_q)$ is a $((q - 1)^2, q - 1)$-arc in $\mathbb{P}^2(\mathbb{F}_q)$.
    \end{proof}
    For $0 \leq i \leq q + 1$, recall the definition of
    $$\mathcal{A}_i = \{ l \in \check{\mathbb{P}}^2(\mathbb{F}_q)\ |\ \#(l \cap \mathcal{X}(\mathbb{F}_q)) = i\} \text{ and } a_i = \#\mathcal{A}_i.$$
    Since $\text{deg}(\mathcal{X}) = q - 1$, then $a_q = a_{q + 1} = 0$. A line $l \in \check{\mathbb{P}}^2(\mathbb{F}_q)$ is called an $i$-line if $l \in \mathcal{A}_i$. A point $P \in \mathbb{P}(\mathbb{F}_q)$ is said to be of type $i_1^{r_1} ... i_t^{r_t}$ ($i_1 > \cdots > i_t$ and $r_1, ..., r_t \geq 0$) if the number of $i_j$-lines through $P$ is $r_j$ for $j = 1, ..., t$. Also, as $\mathcal{X}(\mathbb{F}_q)$ is a $((q - 1)^2, q - 1)$-arc, we may use the following result:
    \begin{lem}\cite[Lemma 12.1.1]{James}\label{PropA_i}
    With the same notation as above, we have the following equalities. 
        \begin{itemize}
        		\item[(i)] $\dsum_{i = 0}^{q - 1} a_i = q^2 + q + 1.$
        		\item[(ii)] $\dsum_{i = 1}^{q - 1} ia_i = (q + 1)(q - 1)^2.$
        		\item[(iii)] $\dsum_{i = 2}^{q - 1} i(i - 1) a_i = q(q - 2)(q - 1)^2.$
        \end{itemize}
    \end{lem}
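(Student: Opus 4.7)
The plan is to establish the three identities via standard double-counting arguments, combining the structure of $\mathbb{P}^2(\mathbb{F}_q)$ with the fact that $\mathcal{X}(\mathbb{F}_q)$ is a $((q-1)^2, q-1)$-arc, as proven in Lemma \ref{LemaArc}. The starting point is the observation that, since $\deg(\mathcal{X}) = q-1$, no $\mathbb{F}_q$-line can meet $\mathcal{X}(\mathbb{F}_q)$ in more than $q-1$ points; hence $a_q = a_{q+1} = 0$ and every $\mathbb{F}_q$-line of $\mathbb{P}^2$ lies in exactly one of the sets $\mathcal{A}_0, \dots, \mathcal{A}_{q-1}$.

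For (i), I would simply use the fact that $\#\check{\mathbb{P}}^2(\mathbb{F}_q) = q^2 + q + 1$ and that these lines partition into the $\mathcal{A}_i$'s for $0 \leq i \leq q-1$. For (ii), I would count the flag set $\mathcal{F}_1 := \{(P,l) : P \in \mathcal{X}(\mathbb{F}_q) \cap l,\ l \in \check{\mathbb{P}}^2(\mathbb{F}_q)\}$ in two ways: summing over lines gives $\#\mathcal{F}_1 = \sum_{i=0}^{q-1} i \cdot a_i$, while summing over points uses the fact that each point of $\mathbb{P}^2(\mathbb{F}_q)$ lies on exactly $q+1$ lines of $\check{\mathbb{P}}^2(\mathbb{F}_q)$, so $\#\mathcal{F}_1 = (q+1) \cdot \text{N}_q(\mathcal{X}) = (q+1)(q-1)^2$.

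For (iii), I would count the set of triples $\mathcal{F}_2 := \{(P, Q, l) : P \neq Q,\ P, Q \in \mathcal{X}(\mathbb{F}_q) \cap l\}$. Summing over lines yields $\#\mathcal{F}_2 = \sum_{i=2}^{q-1} i(i-1) a_i$, where the index starts at $2$ because lines with at most one arc point contribute nothing. Summing over ordered pairs of distinct points, since any two distinct points of $\mathbb{P}^2(\mathbb{F}_q)$ determine a unique line of $\check{\mathbb{P}}^2(\mathbb{F}_q)$, gives $\#\mathcal{F}_2 = (q-1)^2((q-1)^2 - 1)$. A short algebraic simplification $(q-1)^2 - 1 = q(q-2)$ then yields the asserted expression $q(q-2)(q-1)^2$.

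There is essentially no genuine obstacle here: all three identities are textbook consequences of incidence counting in $\mathbb{P}^2(\mathbb{F}_q)$, and the only subtlety is checking that the ranges of summation match the possible line types, which is precisely what the degree bound $q-1$ and Lemma \ref{LemaArc} guarantee. The statement is in fact standard, which is why the authors cite \cite[Lemma 12.1.1]{James} rather than reproving it.
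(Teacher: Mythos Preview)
Your proof is correct and is the standard double-counting argument for these identities. The paper does not supply its own proof of this lemma; it simply cites \cite[Lemma 12.1.1]{James}, so there is nothing to compare against beyond noting that your argument is precisely the textbook one.
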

    \begin{lem}\label{LemmaType}
        Let $P \in \mathbb{P}^2(\mathbb{F}_q)$ be a point of type $i_1^{r_1} ... i_t^{r_t}$. Then $r_1 + \cdots + r_t = q + 1$. Moreover, 
    	\begin{itemize}
    	   \item[(i)] If $P \in \mathcal{X}$, then $i_j \geq 1$ for all $j = 1, ..., t$ and $1 + \sum r_j(i_j - 1) = (q - 1)^2$.
    	   \item[(ii)] If $P \notin \mathcal{X}$ then $\sum r_ji_j = (q - 1)^2$.
        \end{itemize}
    \end{lem}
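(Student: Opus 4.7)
The plan is to use two elementary double-counting identities based on the pencil of lines through $P$. In $\mathbb{P}^2(\mathbb{F}_q)$, the set $\check{P}(\mathbb{F}_q)$ of lines through a rational point $P$ has cardinality exactly $q+1$. Since the integers $r_j$ count the number of lines through $P$ falling into each intersection class $\mathcal{A}_{i_j}$, and every line of $\check{P}(\mathbb{F}_q)$ belongs to exactly one such class (the classes $\mathcal{A}_0, \ldots, \mathcal{A}_{q-1}$ partition $\check{\mathbb{P}}^2(\mathbb{F}_q)$ by definition), the identity $r_1 + \cdots + r_t = q+1$ follows at once.

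For part (i), suppose $P \in \mathcal{X}(\mathbb{F}_q)$. Then every line $l \in \check{P}(\mathbb{F}_q)$ already contains the point $P \in \mathcal{X}(\mathbb{F}_q)$, hence $\#(l \cap \mathcal{X}(\mathbb{F}_q)) \geq 1$, which forces $i_j \geq 1$ for each $j$. Now I would partition the rational points of $\mathcal{X}$ different from $P$ via the pencil: any point $Q \in \mathcal{X}(\mathbb{F}_q) \setminus \{P\}$ lies on a unique line of $\check{P}(\mathbb{F}_q)$, namely the line joining $P$ and $Q$. Hence
\[
\text{N}_q(\mathcal{X}) - 1 \;=\; \sum_{l \in \check{P}(\mathbb{F}_q)} \bigl( \#(l\cap \mathcal{X}(\mathbb{F}_q)) - 1 \bigr) \;=\; \sum_{j=1}^{t} r_j (i_j - 1).
\]
Since $\text{N}_q(\mathcal{X}) = (q-1)^2$, rearranging gives $1 + \sum_j r_j(i_j-1) = (q-1)^2$, as claimed.

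For part (ii), suppose $P \notin \mathcal{X}(\mathbb{F}_q)$. Exactly as above, each rational point $Q \in \mathcal{X}(\mathbb{F}_q)$ lies on a unique line of the pencil $\check{P}(\mathbb{F}_q)$; but now $P$ itself is not one of these points, so no correction term is needed. Therefore
\[
\text{N}_q(\mathcal{X}) \;=\; \sum_{l \in \check{P}(\mathbb{F}_q)} \#(l \cap \mathcal{X}(\mathbb{F}_q)) \;=\; \sum_{j=1}^{t} r_j\, i_j,
\]
and the right-hand side equals $(q-1)^2$ by Lemma \ref{LemaArc}.

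There is no genuine obstacle here; the argument is a routine application of the fact that a pencil of lines through a point partitions the points of the plane distinct from $P$. The only mild care needed is to treat the two cases $P \in \mathcal{X}$ and $P \notin \mathcal{X}$ separately, because in the first case $P$ is itself counted by every line of the pencil and must be subtracted off once.
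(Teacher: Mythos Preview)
Your proof is correct and is precisely the double-counting argument the paper has in mind; the paper's own proof simply says that the lines through $P$ cover $\mathbb{P}^2(\mathbb{F}_q)$ and that $\mathrm{N}_q(\mathcal{X})=(q-1)^2$, leaving the details as ``straightforward.'' One minor remark: in part~(ii) you could cite the standing hypothesis $\mathrm{N}_q(\mathcal{X})=(q-1)^2$ directly rather than Lemma~\ref{LemaArc}.
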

    \begin{proof} 
        Since the $\mathbb{F}_q$-lines through $P$ cover the whole plane $\mathbb{P}^2(\mathbb{F}_q)$ and $\text{N}_q(\mathcal{X}) = (q - 1)^2$, the proof is straightforward. 
    \end{proof}
    \begin{corollary}\label{Lema3}
        Let $i$ and $j$ be (not necessarily distinct) non-negative integers. Suppose that there are different $\mathbb{F}_q$-lines $l_1,l_2$ with $l_1 \in \mathcal{A}_i$ and $l_2 \in \mathcal{A}_j$. If $P = l_1 \cap l_2 \in \mathcal{X}(\mathbb{F}_q)$, then $i + j \geq q$.
    \end{corollary}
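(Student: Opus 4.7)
The plan is to count rational points of $\mathcal{X}$ other than $P$ by summing contributions from the $q+1$ $\mathbb{F}_q$-lines through $P$, using that each such line contributes at most $q-1$ points.

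First, I would invoke Lemma \ref{LemmaType}(i): since $P \in \mathcal{X}(\mathbb{F}_q)$, the $q+1$ $\mathbb{F}_q$-lines through $P$ partition $\mathcal{X}(\mathbb{F}_q) \setminus \{P\}$ according to their intersection with $\mathcal{X}(\mathbb{F}_q)$, giving
\[
1 + \sum_{l \in \check{P}(\mathbb{F}_q)} \bigl(\#(l \cap \mathcal{X}(\mathbb{F}_q)) - 1\bigr) = (q-1)^2.
\]
Next I would single out the two distinguished lines $l_1 \in \mathcal{A}_i$ and $l_2 \in \mathcal{A}_j$, which together contribute $(i-1) + (j-1)$ points of $\mathcal{X}(\mathbb{F}_q) \setminus \{P\}$. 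By Lemma \ref{LemaArc} (or simply the degree bound $\deg(\mathcal{X}) = q-1$), each of the remaining $q-1$ lines in $\check{P}(\mathbb{F}_q)$ meets $\mathcal{X}(\mathbb{F}_q)$ in at most $q-1$ points, hence contributes at most $q-2$ points to $\mathcal{X}(\mathbb{F}_q) \setminus \{P\}$.

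Combining these estimates yields
\[
(i-1) + (j-1) + (q-1)(q-2) \geq (q-1)^2 - 1,
\]
and a direct simplification gives $i + j \geq q$, which is the desired inequality. There is no real obstacle here; the statement is an immediate counting consequence of Lemma \ref{LemmaType}(i) together with the a priori bound $\#(l \cap \mathcal{X}(\mathbb{F}_q)) \leq q-1$ on every $\mathbb{F}_q$-line $l$.
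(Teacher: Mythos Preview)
Your proof is correct and follows essentially the same argument as the paper: both invoke Lemma~\ref{LemmaType}(i), isolate the contributions $(i-1)$ and $(j-1)$ from $l_1$ and $l_2$, bound each of the remaining $q-1$ lines by $q-2$, and simplify to obtain $i+j\ge q$.
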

    \begin{proof}
        Suppose that $P$ is of type $i_1^{r_1} ... i_t^{r_t}$. By Lemma \ref{LemmaType} item (i), we have
    	\begin{align*}
    	    (q - 1)^2 & = 1 + \sum r_j(i_j - 1) \\
    	    & \leq 1 + (i - 1) + (j - 1) + (q - 1)(q - 2) \\
    	    & = i + j - 1 + q^2 - 3q + 2\\
    	    & = i + j - q + (q - 1)^2.
        \end{align*}
    	Therefore, $i + j \geq q$.
    \end{proof}
    
    \begin{definition}
    For $i = 0,\ldots, q-1$ we define $\psi_i: \mathbb{P}^2(\mathbb{F}_q) \rightarrow \{0, 1, ..., q + 1\}$ as  
    $$\psi_i(P) := \#(\check{P}(\mathbb{F}_q) \cap \mathcal{A}_i).$$
    
    \end{definition}
    
    \begin{lem}\label{LemmaPR3}
        If $P \in \mathcal{X}(\mathbb{F}_q)$, then $\psi_{q - 1}(P) \geq 3$. In particular, $a_{q - 1} \geq 3(q - 1)$. Also, if $\psi_{q - 1}(P) = 3$, then $P$ is of type $(q - 1)^3(q - 2)^{q - 2}$. 
    \end{lem}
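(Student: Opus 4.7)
The plan is to exploit the counting identity for the type of $P$ coming from Lemma \ref{LemmaType}(i), together with the trivial bound $i_j \leq q-1$ imposed by $\deg(\mathcal{X}) = q-1$.

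Fix $P \in \mathcal{X}(\mathbb{F}_q)$ and write the type of $P$ as $i_1^{r_1} \cdots i_t^{r_t}$. By Lemma \ref{LemmaType}(i) we have $\sum_j r_j(i_j - 1) = (q-1)^2 - 1 = q(q-2)$, while $\sum_j r_j = q+1$. Setting $s := \psi_{q-1}(P)$, I would split the lines through $P$ into the $s$ lines that meet $\mathcal{X}(\mathbb{F}_q)$ in exactly $q-1$ points (each contributing $q-2$ to the sum $\sum r_j(i_j-1)$) and the remaining $q+1-s$ lines, each of which contributes at most $q-3$ since $i_j \leq q-2$ for those. This gives
\[
q(q-2) \;=\; \sum_j r_j(i_j-1) \;\leq\; s(q-2) + (q+1-s)(q-3),
\]
which simplifies to $s \geq 3$, proving the first assertion.

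For the bound on $a_{q-1}$, I would use a double-counting argument on the set of incident pairs $(P,l)$ with $P \in \mathcal{X}(\mathbb{F}_q)$ and $l \in \mathcal{A}_{q-1}$. Counting first by $P$ gives $\sum_{P \in \mathcal{X}(\mathbb{F}_q)} \psi_{q-1}(P)$, and counting by $l$ gives $(q-1)\, a_{q-1}$. Combining with the first part and $|\mathcal{X}(\mathbb{F}_q)| = (q-1)^2$ yields $(q-1)\,a_{q-1} \geq 3(q-1)^2$, i.e. $a_{q-1} \geq 3(q-1)$.

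Finally, if $\psi_{q-1}(P) = 3$ then the inequality above must be an equality, which forces each of the remaining $q+1-3 = q-2$ lines through $P$ to contribute the maximum $q-3$, i.e.\ to be a $(q-2)$-line. Therefore $P$ is of type $(q-1)^3 (q-2)^{q-2}$, as required. There is no real obstacle here: the statement is tailor-made for the simple linear-programming-style estimate on the partition of lines through $P$, and the equality case drops out for free once one tracks where the slack in the main inequality comes from.
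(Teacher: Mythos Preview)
Your proof is correct and follows essentially the same approach as the paper: both use the identity from Lemma~\ref{LemmaType}(i) together with the bound $i_j \le q-1$ to get $\psi_{q-1}(P)\ge 3$, and both double-count incidences $(P,l)$ to obtain $a_{q-1}\ge 3(q-1)$. For the equality case the paper introduces $s_P=\psi_{q-2}(P)$ and runs a second linear estimate to show $s_P\ge q-2$, whereas you simply read off the equality case of the main inequality to force every remaining line through $P$ to be a $(q-2)$-line; your version is a bit cleaner but not a different argument.
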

    \begin{proof}
        Let $r_P = \psi_{q - 1}(P)$. By Lemma \ref{LemmaType} item (i), if $P$ is of type $i_1^{r_1} ... i_t^{r_t}$ then
        \begin{align*}
            (q - 1)^2 & = 1 + \sum r_j(i_j - 1) \\
            & \leq 1 + r_P(q - 2) + (q + 1 - r_P)(q - 3) \\
            & = 1 + qr_P - 2r_P + q^2 - 3q + q - 3 - qr_P + 3r_P \\
            & = r_P + (q - 1)^2 - 3,
        \end{align*}
        hence, $r_P \geq 3$. We get $3(q - 1)^2$ lines in $\mathcal{A}_{q - 1}$. However, each line was counted at most $(q - 1)$ times. This implies $a_{q - 1} \geq 3(q - 1)$. If $\psi_{q - 1}(P) = 3$, let $s_P = \psi_{q - 2}(P)$, then
        \begin{align*}
            (q - 1)^2 & = 1 + \sum r_j(i_j - 1) \\
            & \leq 1 + 3(q - 2) + s_P(q - 3) + (q - 2 - s_P)(q - 4) \\
            & = 3q - 5 + qs_P - 3s_P + q^2 - 4q - 2q + 8 - qs_P + 4s_P \\
            & = s_P + (q - 1)^2 + 2 - q,
        \end{align*}
        hence, $s_P \geq q - 2$. This means that the other lines in $\check{P}(\mathbb{F}_q)$ are in $\mathcal{A}_{q - 2}$. Therefore, $P$ is of type $(q - 1)^3(q - 2)^{q - 2}$. 
        \end{proof}
        
   Loosely speaking, in order to prove our result, we need that there exists a point $Q_0 \in Z(\mathcal{X})$ such that $\psi_{q-1}(Q_0)$ is \emph{big enough}. In order to do so, we give the following proposition, which is inspired by \cite[Proposition 3.1 and Proposition 3.2]{Homma2}.

    \begin{prop}\label{PropQ}
        Fix a point $Q_0 \in Z(\mathcal{X})$ and $l_{\infty} \in \check{\mathbb{P}}^2(\mathbb{F}_q) \backslash \check{Q}_0(\mathbb{F}_q)$. Suppose there are lines $l_1, ..., l_t \in \check{Q}_0(\mathbb{F}_q)\ (2 \leq t \leq q - 1)$ such that $l_i(\mathbb{F}_q) \backslash (\{Q_0\} \cup l_{\infty}) \subseteq \mathcal{X}(\mathbb{F}_q).$
        For a line $l \in \check{Q}_0(\mathbb{F}_q)$ other than these $t$ lines, if  $\#(l\backslash l_{\infty}) \cap \mathcal{X}(\mathbb{F}_q) \geq q - t$, then $l(\mathbb{F}_q) \backslash (\{Q_0\} \cup l_{\infty}(\mathbb{F}_q)) \subseteq \mathcal{X}(\mathbb{F}_q).$
    \end{prop}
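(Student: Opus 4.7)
The plan is to translate the proposition into a polynomial identity in an affine chart adapted to $Q_0$ and $l_\infty$, and then close the argument via a Lagrange-style interpolation. First I fix projective coordinates so that $Q_0=(0{:}0{:}1)$ and $l_\infty = \VarP(Z)$. Since $t+1 \leq q$, at least one of the $q+1$ lines of $\check{Q}_0(\mathbb{F}_q)$ lies outside $\{l, l_1, \ldots, l_t\}$; after a linear change of $(X,Y)$ fixing both $Q_0$ and $l_\infty$, I may assume this missing line is $\VarP(X)$, so every line among $l, l_1, \ldots, l_t$ takes the form $Y = \lambda X$ with $\lambda \in \mathbb{F}_q$. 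Write $l_i : Y = \lambda_i X$ with the $\lambda_i$ distinct, $l : Y = \mu X$ with $\mu \notin \{\lambda_1, \ldots, \lambda_t\}$, and set $f(X,Y) := F(X,Y,1)$, which has degree exactly $q-1$ since $\VarP(Z)$ is not a component of $\mathcal{X}$. The hypothesis on $l_i$ says that the degree-$\leq q-1$ polynomial $f(X, \lambda_i X)$ vanishes at every element of $\mathbb{F}_q^*$, so $f(X, \lambda_i X) = c_i(X^{q-1}-1)$ for some $c_i \in \mathbb{F}_q^*$; evaluating at $X = 0$ gives $c_i = -f(0,0) =: c$, independent of $i$.

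Next I package these identities into a single bivariate polynomial. Define
$$G(X, \lambda) := f(X, \lambda X) - c(X^{q-1}-1) \in \mathbb{F}_q[X, \lambda],$$
which vanishes identically when $\lambda = \lambda_i$ for every $i = 1, \ldots, t$. Since $\prod_{i=1}^t (\lambda - \lambda_i)$ is monic in $\lambda$, Euclidean division in $\mathbb{F}_q[X][\lambda]$ yields a factorization $G(X, \lambda) = \prod_{i=1}^t (\lambda - \lambda_i) \cdot K(X, \lambda)$ with $K \in \mathbb{F}_q[X, \lambda]$. The key structural observation is an $X$-adic vanishing of $K$. Writing $f(X, Y) = \sum_{i+j \leq q-1} f_{ij} X^i Y^j$, the coefficient of $X^k$ in $f(X, \lambda X)$ is $\sum_{j=0}^{k} f_{k-j, j} \lambda^j$, a polynomial in $\lambda$ of degree at most $k$. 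Thus if $G(X, \lambda) = \sum_{k=0}^{q-1} g_k(\lambda) X^k$, then $\deg_\lambda g_k \leq k$ for every $k \neq q-1$, and moreover $g_0(\lambda) = f_{00} + c = 0$. Writing $K(X, \lambda) = \sum_k K_k(\lambda) X^k$, the identity $g_k(\lambda) = \prod_{i=1}^t (\lambda - \lambda_i)\, K_k(\lambda)$ combined with $\deg_\lambda g_k \leq k < t$ forces $K_k \equiv 0$ for $0 \leq k \leq t-1$. Hence $K(X, \lambda) = X^t \tilde K(X, \lambda)$ with $\deg_X \tilde K \leq q - 1 - t$.

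Finally I specialize $\lambda = \mu$ and use the hypothesis on $l$. For every $a \in \mathbb{F}_q^*$ with $f(a, \mu a) = 0$, one has $G(a, \mu) = 0$, and since $\prod_{i=1}^t (\mu - \lambda_i) \neq 0$ and $a^t \neq 0$, this forces $\tilde K(a, \mu) = 0$. By hypothesis such $a$ number at least $q - t$, while $\deg_X \tilde K(X, \mu) \leq q - 1 - t < q - t$, so $\tilde K(\,\cdot\,, \mu) \equiv 0$. Therefore $G(X, \mu) \equiv 0$, i.e.\ $f(X, \mu X) = c(X^{q-1} - 1)$, which vanishes at every $a \in \mathbb{F}_q^*$; this is exactly the claim that every affine point of $l$ distinct from $Q_0$ and $l \cap l_\infty$ lies on $\mathcal{X}$.

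The step I expect to be most delicate is the middle one: the whole argument hinges on the fact that $g_k(\lambda)$ has degree at most $k$ (not merely at most $q-1$) in $\lambda$, a bound that rests on $F$ being homogeneous of degree exactly $q-1$. If one mishandles the dehomogenization and retains only the weaker bound $\deg_\lambda g_k \leq q-1$, the low-order vanishing $K_k \equiv 0$ for $k < t$ is lost, and the final strict inequality $q - t > q - 1 - t$ becomes unavailable.
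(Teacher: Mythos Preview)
Your proof is correct and reaches the same conclusion, but it is organized differently from the paper's argument and, in particular, avoids Noether's ``AF+BG'' theorem entirely.

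The paper places two of the given lines $l_1,l_2$ on the coordinate axes $\{X=0\},\{Y=0\}$, applies Noether's theorem to the pair $\VarP(Z^{q-1}-X^{q-1}-Y^{q-1}),\ \VarP(XY)$ in order to write $F$ in the special form
\[
F = a_{00}(Z^{q-1}-X^{q-1}-Y^{q-1}) + XY\sum_{\nu=0}^{q-3} g_\nu(X,Y)\,Z^{q-3-\nu},
\]
and then runs two explicit Vandermonde systems in the variable $Z$: the first uses the remaining $t-2$ lines to kill the $g_\nu$ with $\nu<t-2$, the second uses the $q-t$ known points on $l$ to finish. Your argument instead sends a line \emph{outside} $\{l,l_1,\dots,l_t\}$ to $\{X=0\}$, works with the two-variable polynomial $G(X,\lambda)=f(X,\lambda X)-c(X^{q-1}-1)$, factors out $\prod(\lambda-\lambda_i)$ directly over $\mathbb{F}_q[X][\lambda]$, and replaces both Vandermonde steps by pure degree comparisons (in $\lambda$ to force $K_k\equiv 0$ for $k<t$, then in $X$ to force $\tilde K(\,\cdot\,,\mu)\equiv 0$). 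The two routes are equivalent in spirit---each is a Lagrange-type interpolation---but yours is more self-contained and symmetric; the paper's has the advantage of exhibiting an explicit decomposition of $F$ that may be reused elsewhere. The delicate point you flag, that $\deg_\lambda g_k\le k$ comes from $\deg F=q-1$, is exactly what drives both arguments (in the paper it appears as $\deg_y g_\nu\le\nu$).
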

    \begin{proof}
        Choose coordinates $X, Y, Z$ of $\mathbb{P}^2$  such that $l_1 = \VarP(X), l_2 = \VarP(Y)$ and $l_{\infty} = \VarP(Z)$, whence $Q_0 = (0: 0: 1)$. Let
        $G_0 = Z^{q - 1} - X^{q - 1} - Y^{q - 1}, G_1 = XY \in \mathbb{F}_q[X, Y, Z]$. Note that $\VarP(G_0)$ and $\VarP(G_1)$ are plane curves with no common components. A direct computation shows that $\#(\VarP(G_0) \cap \VarP(G_1)) = 2(q - 1)$. Also, since $l_i(\mathbb{F}_q) \backslash (\{Q_0\} \cup l_{\infty}) \subseteq \mathcal{X}(\mathbb{F}_q)$ for $i = 1, 2$ then $\VarP(G_0) \cap \VarP(G_1) \subseteq \mathcal{X}(\mathbb{F}_q)$. Let $F$ be a homogeneous equation for $\mathcal{X}$ over $\mathbb{F}_q$, by Nother’s “AF + BG” Theorem \ref{Noether}, we can write
        $$F(X, Y, Z) = a_{00}(Z^{q - 1} - X^{q - 1} - Y^{q - 1}) + XY(g_{q - 3}(X, Y) + g_{q - 4}(X, Y)Z + \cdots + g_0 Z^{q - 3})$$
        where $g_\nu \in \mathbb{F}_q[X, Y, Z]$ is homogeneous of degree $\nu$ and $a_{00} \in \mathbb{F}_q^{*}$. In general, any line $L \in \check{Q}_0(\mathbb{F}_q) \backslash (l_1 \cup l_2)$ is defined by an equation of the form $Y - \mu X = 0$ for some $\mu \in \mathbb{F}_q^{*}.$ So 
        $$L(\mathbb{F}_q) \backslash (Q_0 \cup l_{\infty}(\mathbb{F}_q)) = \{ (1: \mu: \beta)\ |\ \beta \in \mathbb{F}_q^{*} \}.$$ 
        Since $a_{00}(\beta^{q - 1} - 1 - \mu^{q - 1}) = - a_{00}$ when $\beta, \mu \in \mathbb{F}_q^{\times}$, then
        \begin{equation}\label{EqF1}
            F(1, \mu, \beta)  = (\mu(g_{q - 3}(1, \mu) -a_{00}) + \mu q_{q - 4}(1, \mu)\beta + \cdots + \mu g_0 \beta^{q - 3}.
        \end{equation}
        In particular, if $l_{2 + \mu} = \VarP(Y - a_{\mu} X)$ with $\mu = 1, ..., t - 2$, we must have $a_{\mu} \neq 0$. Let
        $$B = \left(\begin{array}{ccccc}
        	&& \vdots&& \\
        	1 & \beta & \beta^2 & \cdots & \beta^{q - 3} \\
        	&& \vdots &&
        \end{array}  \right)_{\beta \in \mathbb{F}_q^* \backslash \{1\}}.$$
        Since $l_{2 + \mu}(\mathbb{F}_q) \backslash (\{Q_0\} \cup l_{\infty}) \subseteq \mathcal{X}(\mathbb{F}_q)$, by equation (\ref{EqF1}), then
        $$B \cdot \left(\begin{array}{c}
        		a_{\mu}g_{q - 3}(1, a_\mu) - a_{00} \\
        		a_{\mu}g_{q - 4}(1, a_\mu) \\
        		\vdots \\
        		a_{\mu} g_0
        	\end{array}\right) = \left(\begin{array}{c}
        		0 \\
        		0 \\
        		\vdots \\
        		0
        	\end{array}\right).$$
        	Since $B$ is a Vandermonde matrix, we have $\text{det}(B) \neq 0$. This implies that 
        	$$g_{q - 4}(1, a_{\mu}) = \cdots = g_0(1, a_{\mu}) = 0.$$
        	If $\nu < t - 2$, since $g_\nu(1, y)$ has $t - 2$ roots $\{ a_1, ..., a_{t - 2}\}$ but its degree is less than $t - 2$, then $g_\nu(1, y) \equiv 0$ as      a polynomial in $y$. So
        	$$F(1, y, z) = a_{00}(z^{q - 1} - y^{q - 1}) + (y g_{q - 3}(1, y) - a_{00}) + y q_{q - 4}(1, y)z + \cdots + y g_{t - 2}(1, y)z^{q - t - 1}.$$
        	Let $l = \VarP(Y - \mu X)$, where $\mu \in \mathbb{F}_q^{*}$, and $\{ (1, \mu, \beta_i)\ |\ 1 \leq i \leq q - t\}$ a set of chosen points of $(l\backslash l_{\infty})(\mathbb{F}_q) \cap \mathcal{X}.$ So $\mu \neq 0$ and $\beta_i \neq 0$ for $i = 1, ..., q - t$. Hence,
        	$$\left(\begin{array}{ccccc}
        		&& \vdots&& \\
        		1 & \beta_i & \beta_i^2 & \cdots & \beta_i^{q - t - 1} \\
        		&& \vdots &&
        	\end{array}  \right)_{i = 1, ..., q - t} \left(\begin{array}{c}
        		\mu g_{q - 3}(1, v) - a_{00} \\
        		\mu g_{q - 4}(1, v) \\
        		\vdots \\
        		\mu g_{t - 2}(1, v)
        	\end{array}\right) = \left(\begin{array}{c}
        		0 \\
        		0 \\
        		\vdots \\
        		0
        	\end{array}\right).$$
        	This implies that $\mu g_{q - 3}(1, \mu) - a_{00} = \mu g_{q - 4}(1, \mu) = \mu g_{t - 2}(1, \mu) = 0$. So $F(1, \mu, \beta) = 0$ for any $\beta \in \mathbb{F}_q^{*}$. Therefore, $l(\mathbb{F}_q) \backslash (\{Q_0\} \cup l_{\infty}(\mathbb{F}_q)) \subseteq \mathcal{X}(\mathbb{F}_q)$.
    
    \end{proof}
    
    \begin{corollary}\label{CorRQ2}
        Suppose there is a point $Q \in Z(\mathcal{X})$ such that $r = \psi_{q - 1}(Q) \geq 2$. If these $r$ lines are $l_1, ..., l_r$ then $\{ (l_i(\mathbb{F}_q) \backslash \{Q\}) \cap Z(\mathcal{X})\ |\ i = 1, ..., r\}$ is contained in a line.
    \end{corollary}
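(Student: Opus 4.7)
The plan is to reduce the claim to a single application of Proposition~\ref{PropQ}. A $(q-1)$-line $l_i$ through $Q$ meets $\mathcal{X}(\mathbb{F}_q)$ in exactly $q-1$ points, and hence meets $Z(\mathcal{X})$ in exactly two points, one of which is $Q$; so $(l_i(\mathbb{F}_q) \backslash \{Q\}) \cap Z(\mathcal{X}) = \{P_i\}$ for a uniquely determined $P_i \in Z(\mathcal{X})$. The statement is therefore equivalent to showing that the points $P_1,\ldots,P_r$ are collinear. For $r = 2$ there is nothing to prove, so I will assume $r \geq 3$.

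The idea is to let $l_\infty$ be the line joining $P_1$ and $P_2$ and then, for each $i \geq 3$, to invoke Proposition~\ref{PropQ} with $Q_0 = Q$, $t = 2$, and the lines $l_1,l_2$ in order to force $P_i \in l_\infty$. Before doing this I need two preliminary checks. First, $P_1 \neq P_2$: otherwise $l_1$ and $l_2$ would both equal the unique line joining the two distinct points $Q$ and $P_1 = P_2$, contradicting $l_1 \neq l_2$, so $l_\infty$ is well defined. Second, $Q \notin l_\infty$: otherwise $l_\infty$ would coincide with $l_1$ (the unique $\mathbb{F}_q$-line through $Q$ and $P_1$), which would force $P_2 \in l_1$ and hence $P_2 = P_1$, again a contradiction. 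Thus $l_\infty \in \check{\mathbb{P}}^2(\mathbb{F}_q) \backslash \check{Q}(\mathbb{F}_q)$, as required. Moreover, for $j = 1,2$ the set $l_j(\mathbb{F}_q) \backslash (\{Q\} \cup l_\infty)$ has $q - 1$ elements, all of which lie in $\mathcal{X}(\mathbb{F}_q)$, since the unique point $l_j \cap l_\infty$ is precisely $P_j \in Z(\mathcal{X})$; this verifies the hypotheses of Proposition~\ref{PropQ}.

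Now for each $l = l_i$ with $i \geq 3$, the line $l$ intersects $l_\infty$ in a single point and contains $q-1$ points of $\mathcal{X}(\mathbb{F}_q)$, so at least $q - 2 = q - t$ of these points lie in $l \backslash l_\infty$. Proposition~\ref{PropQ} then yields $l(\mathbb{F}_q) \backslash (\{Q\} \cup l_\infty(\mathbb{F}_q)) \subseteq \mathcal{X}(\mathbb{F}_q)$, which is equivalent to $\{P_i\} \subseteq l_\infty$. Iterating over $i = 3, \ldots, r$ places all the $P_i$ on the single line $l_\infty$, proving the collinearity. The only genuinely delicate step is the verification $Q \notin l_\infty$, which is essential in order to legally apply Proposition~\ref{PszoQ}\unskip\ignorespaces Proposition~\ref{PropQ}; the remainder of the argument is routine bookkeeping of line--curve intersections.
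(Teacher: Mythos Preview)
Your proof is correct and follows essentially the same approach as the paper: define $l_\infty$ as the line through the two ``extra'' points on $l_1$ and $l_2$, then apply Proposition~\ref{PropQ} with $t=2$ to each remaining $l_i$. Your version is in fact more careful than the paper's, which omits the checks that $P_1\neq P_2$ and $Q\notin l_\infty$; these are exactly the verifications needed to make the application of Proposition~\ref{PropQ} legitimate. (Note the stray reference ``\verb|\ref{PszoQ}|'' near the end, which you should clean up.)
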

    \begin{proof}
        Let $Q_i = (l_i(\mathbb{F}_q) \backslash \{Q\}) \cap Z(\mathcal{X})$ with $i = 1, ..., r$. Since $r \geq 2$, we can consider the line $l_{\infty} := \overline{Q_1Q_2}$. Since $l_i \in \mathcal{A}_{q - 1}$, then $\# (l_i \backslash l_{\infty}) \cap \mathcal{X}(\mathbb{F}_q) \geq (q + 1) - 3 = q - 2$ for $i = 3, ..., r$. By Proposition \ref{PropQ}, we have $l_i \backslash (l_{\infty} \cup \{Q\}) \subseteq \mathcal{X}(\mathbb{F}_q)$. Therefore, 
        $$\{ (l_i(\mathbb{F}_q) \backslash \{Q\}) \cap Z(\mathcal{X})\ |\ i = 1, ..., r\} \subseteq l_{\infty}.$$            
    \end{proof}
  
  We now prove some interesting Corollaries to Proposition \ref{PropQ}. They can be thought of as partial negative answers to Question 1 when assuming stronger conditions on the structure of  $Z(\mathcal{X})$. 
    \begin{corollary}\label{Cor1}
        If there are two lines $l_{\infty}^1, l_{\infty}^2 \in \check{\mathbb{P}}^2(\mathbb{F}_q)$ such that $(l_{\infty}^1 \cup l_{\infty}^2)(\mathbb{F}_q) \subseteq Z(\mathcal{X})$, then $a_0 = 3$.
    \end{corollary}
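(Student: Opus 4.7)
The plan is to locate a third $0$-line $l_\infty^3$ and then close the argument by a pigeonhole-style count on $Z(\mathcal{X})$.

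First I would set $Q := l_\infty^1 \cap l_\infty^2$, which belongs to $Z(\mathcal{X})$ by hypothesis. The pencil of $q+1$ lines through $Q$ already contains two elements of $\mathcal{A}_0$, namely $l_\infty^1$ and $l_\infty^2$. Applying Lemma \ref{LemmaType}(ii) at $Q$ gives $\sum_j r_j i_j = (q-1)^2$; since the two distinguished lines contribute $0$, the remaining $q-1$ lines through $Q$ must contribute $(q-1)^2$ in total, with each individual summand bounded above by $q-1$. Hence every one of these lines lies in $\mathcal{A}_{q-1}$, and $\psi_{q-1}(Q)=q-1\geq 2$.

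Next, I would label these lines $m_1,\dots,m_{q-1}$, and note that each $m_j$ meets $Z(\mathcal{X})$ in exactly $\{Q,Q_j\}$ for a unique $Q_j\neq Q$. By Corollary \ref{CorRQ2}, the points $Q_1,\dots,Q_{q-1}$ are collinear; call $l_\infty^3$ the line containing them. The line $l_\infty^3$ cannot pass through $Q$: otherwise it would coincide with some $m_k$, but then $Q_j\in m_j\cap m_k=\{Q\}$ for $j\neq k$ would force $Q_j=Q$, a contradiction. Therefore $l_\infty^3$ meets $l_\infty^1$ and $l_\infty^2$ in two further points $R_1,R_2\in Z(\mathcal{X})$, both distinct from each $Q_j$ (since each $m_j$ only meets $l_\infty^1\cup l_\infty^2$ at $Q$) and from each other. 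Consequently $l_\infty^3$ contains the $(q-1)+2=q+1$ rational points $Q_1,\dots,Q_{q-1},R_1,R_2\in Z(\mathcal{X})$, so $l_\infty^3\in \mathcal{A}_0$ and $a_0\geq 3$.

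Finally, I would conclude by counting. The three lines $l_\infty^1,l_\infty^2,l_\infty^3$ are pairwise distinct and non-concurrent, hence their union covers exactly $3(q+1)-3=3q=|Z(\mathcal{X})|$ rational points. Therefore $Z(\mathcal{X})=(l_\infty^1\cup l_\infty^2\cup l_\infty^3)(\mathbb{F}_q)$. If a fourth $0$-line $l_\infty^4$ existed, it would meet each $l_\infty^i$ in a single point, so at most three of its $q+1$ rational points would lie in $Z(\mathcal{X})$. Since $q\geq 5$ this contradicts $l_\infty^4\in\mathcal{A}_0$, and hence $a_0=3$. The crux of the argument is the rigidity forced by Lemma \ref{LemmaType}(ii) in the first step, which pins down $\psi_{q-1}(Q)$ exactly; once this structural information is in hand, the remaining verifications are essentially routine.
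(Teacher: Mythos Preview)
Your proof is correct and follows essentially the same route as the paper's: identify $Q=l_\infty^1\cap l_\infty^2$, show that the remaining $q-1$ lines through $Q$ are all $(q-1)$-lines, apply Corollary~\ref{CorRQ2} to force the ``second'' points $Q_j$ onto a common line, and deduce $a_0=3$. The only cosmetic differences are that the paper obtains $\psi_{q-1}(Q)=q-1$ via the count $|Z(\mathcal{X})\setminus(l_\infty^1\cup l_\infty^2)|=q-1$ rather than Lemma~\ref{LemmaType}(ii), and it leaves the verification $a_0\le 3$ implicit, whereas you spell it out.
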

    \begin{proof}
        Let $Z^*(\mathcal{X}) := Z(\mathcal{X}) \backslash (l_{\infty}^1 \cup l_{\infty}^2)$ and $Q \in l_{\infty}^1 \cap l_{\infty}^2$. Since $\#Z(\mathcal{X}) = 3q$, we have $\#Z^*(\mathcal{X}) = q - 1$. Since $\#\check{Q}(\mathbb{F}_q) \backslash \{l_{\infty}^1, l_{\infty}^2\} = q - 1$ and $\text{deg}(\mathcal{X}) = q - 1$, then each line $l \in \check{Q}(\mathbb{F}_q) \backslash \{l_{\infty}^1, l_{\infty}^2\}$ contains exactly one point of $Z^*(\mathcal{X})$. By Corollary \ref{CorRQ2}, $Z^*(\mathcal{X})$ is contained in a line. Therefore, $a_0 = 3$.
    \end{proof}
    \begin{corollary}\label{CorR4}
        Let $q \geq 8$. If $Q \in Z(\mathcal{X})$ is a point such that $r = \psi_{q - 1}(Q) \geq 4$, then $r = q - 1$. In particular, $a_0 = 3$.
    \end{corollary}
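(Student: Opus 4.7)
The plan is to combine Corollary \ref{CorRQ2} with Proposition \ref{PropQ} to bound the intersections of the remaining lines through $Q$ with $\mathcal{X}(\mathbb{F}_q)$, and then to exclude all values of $r$ except $r=q-1$ via a single counting estimate that crucially uses $q\ge 8$.

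I would first let $l_1,\ldots,l_r$ be the $(q-1)$-lines through $Q$ and set $Q_i:=(l_i(\mathbb{F}_q)\setminus\{Q\})\cap Z(\mathcal{X})$. Corollary \ref{CorRQ2} provides a line $l_\infty$ containing all the $Q_i$. Since $r\ge 4$ distinct $Q_i$'s lie on $l_\infty$ with $Q_i\in l_i$, the assumption $Q\in l_\infty$ would force $l_\infty$ to coincide with each $l_i$, contradicting their distinctness; hence $Q\notin l_\infty$. Moreover, for each $i$ one has $l_i(\mathbb{F}_q)\setminus(\{Q\}\cup l_\infty)=l_i(\mathbb{F}_q)\setminus\{Q,Q_i\}\subseteq\mathcal{X}(\mathbb{F}_q)$, so the hypotheses of Proposition \ref{PropQ} are met with $t=r$.

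Applying that proposition, any line $l\in\check Q(\mathbb{F}_q)\setminus\{l_1,\ldots,l_r\}$ satisfying $|(l\setminus l_\infty)\cap\mathcal{X}(\mathbb{F}_q)|\ge q-r$ would be forced to be a further $(q-1)$-line through $Q$, contradicting $r=\psi_{q-1}(Q)$. Hence $|(l\setminus l_\infty)\cap\mathcal{X}(\mathbb{F}_q)|\le q-r-1$ for every such $l$, and since $l\cap l_\infty$ is a single point (possibly in $\mathcal{X}$) one gets $i(l):=|l\cap\mathcal{X}(\mathbb{F}_q)|\le q-r$. Summing $i(l)$ over the $q+1$ lines through $Q$ gives $(q-1)^2$ (as $Q\notin\mathcal{X}$), so
$$(q-1)^2\le r(q-1)+(q+1-r)(q-r),$$
which rearranges to the key inequality $r(q+2-r)\le 3q-1$. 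The function $f(r):=r(q+2-r)$ is concave and symmetric about $(q+2)/2$, with $f(4)=f(q-2)=4q-8\ge 3q>3q-1$ for $q\ge 8$; hence $f(r)>3q-1$ for every integer $r\in\{4,\ldots,q-2\}$, and only $r=q-1$ (where $f(q-1)=3q-3$) passes the test. Thus $r=q-1$, and the sum $\sum_l i(l)=(q-1)^2$ is saturated by the $(q-1)$-lines alone, forcing the two remaining lines through $Q$ to lie in $\mathcal{A}_0$. Therefore $a_0\ge 2$, and Corollary \ref{Cor1} yields $a_0=3$.

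I expect the delicate point to be the correct set-up of Proposition \ref{PropQ}: establishing $Q\notin l_\infty$, checking its hypotheses for $t=r$, and carefully tracking the single possibly rational point of $l\cap l_\infty$ when converting the proposition's conclusion into a bound on $i(l)$. Once these are in place, the remainder of the argument reduces to a one-line concavity estimate that exploits the hypothesis $q\ge 8$.
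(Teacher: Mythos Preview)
Your proposal is correct and follows essentially the same approach as the paper: both use Corollary~\ref{CorRQ2} and Proposition~\ref{PropQ} to show that every line through $Q$ outside $\mathcal{A}_{q-1}$ meets $\mathcal{X}(\mathbb{F}_q)$ in at most $q-r$ points, then derive the counting inequality $(q-1)^2\le r(q-1)+(q+1-r)(q-r)$ and conclude $r=q-1$, finishing via Corollary~\ref{Cor1}. The only differences are cosmetic: you are more explicit in verifying the hypotheses of Proposition~\ref{PropQ} (in particular $Q\notin l_\infty$), and you analyse the resulting inequality via the concavity of $r\mapsto r(q+2-r)$, whereas the paper rewrites it as $q(r-3)\le r(r-2)-1$ and argues in two steps.
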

    \begin{proof}
        Since $r \geq 4$, by Lemma \ref{PropQ} and Corollary \ref{CorRQ2}, the lines in $\check{Q}(\mathbb{F}_q)\backslash \mathcal{A}_{q - 1}$ have at most $q - r$ points of $\mathcal{X}(\mathbb{F}_q)$. Then
        \begin{align*}
            q^2 - 2q + 1 = (q - 1)^2 & \leq r(q - 1) + (q + 1 - r)(q - r) \\
            & = qr - r + q^2 + q - qr - qr - r + r^2 \\
            & = q^2 + q - qr + r(r - 2).
        \end{align*}
        Hence $q(r - 3) \leq r(r - 2) - 1$. Since $q \geq 8$, we must have $r \geq 7$. So
        $$q \leq \dfrac{r(r - 2) - 1}{r - 3} = \dfrac{(r + 1)(r - 3) + 2}{r - 3} = r + 1 + \dfrac{2}{r - 3}.$$
        Since $r \leq (q - 1)$, this implies that $r = q - 1$. Therefore, $Q$ is of type $(q - 1)^{q - 1}0^2$ and, by Corollary \ref{Cor1}, $a_0 = 3$.
    \end{proof}
    \begin{corollary}\label{q7a03}
        Let $q \geq 7$. If there is a line $l_1^{\infty} \in \check{\mathbb{P}}^2(\mathbb{F}_q)$ such that $l_1^{\infty}(\mathbb{F}_q) \subseteq Z(\mathcal{X})$, then $a_0 = 3$.
    \end{corollary}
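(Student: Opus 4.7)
The plan is to argue by contradiction: by Corollary \ref{Cor1} it suffices to rule out $a_0=1$, so I will assume that $l_1^{\infty}$ is the unique $0$-line and derive an inconsistency with the lower bound $a_{q-1}\geq 3(q-1)$ from Lemma \ref{LemmaPR3}.

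The core step is to prove that, under $a_0=1$, every $Q \in l_1^{\infty}(\mathbb{F}_q)$ satisfies $\psi_{q-1}(Q) \leq 2$. First I would dispose of the easy bound $\psi_{q-1}(Q) \leq q-2$: if it were $q-1$, the remaining non-$l_1^{\infty}$ line through $Q$ would have $k_l = 0$, producing a second $0$-line. For $r := \psi_{q-1}(Q) \in \{3, \ldots, q-2\}$, I would apply Corollary \ref{CorRQ2} to the $(q-1)$-lines $l_1, \ldots, l_r$ through $Q$: because each $l_i$ meets $l_1^{\infty}$ only at $Q$, the ``other'' $Z$-points $Q_i := (l_i \cap Z(\mathcal{X})) \setminus \{Q\}$ lie in $Z(\mathcal{X}) \setminus l_1^{\infty}$ and are collinear on a line $l_{\infty}$; one checks $Q \notin l_{\infty}$, since otherwise $l_1 = \overline{QQ_1} = l_2$. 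The hypotheses of Proposition \ref{PropQ} are then satisfied with $t = r$, and because $l_1, \ldots, l_r$ are by definition all of the $(q-1)$-lines through $Q$, its contrapositive forces $|(l \setminus l_{\infty}) \cap \mathcal{X}(\mathbb{F}_q)| \leq q-r-1$, and hence $k_l \leq q-r$, for every line $l$ through $Q$ different from $l_1, \ldots, l_r$ (the line $l_1^{\infty}$ itself satisfies this trivially).

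Using $\sum_{l \ni Q} k_l = (q-1)^2$ from Lemma \ref{LemmaType}(ii) and subtracting the contributions $r(q-1)$ from $l_1, \ldots, l_r$ and $0$ from $l_1^{\infty}$, the remaining $q-r$ lines must contribute $(q-1)(q-1-r) \leq (q-r)^2$, which via the identity $r^2-(q+1)r+(2q-1) = (r-2)(r-(q-1))+1$ rearranges to $(r-2)(q-1-r) \leq 1$. For integer $r \in \{3, \ldots, q-2\}$ the left-hand side is minimized at the endpoints and equals $q-4$, so this forces $q \leq 5$, contradicting $q \geq 7$. To finish, I would sum the resulting bound $\psi_{q-1}(Q) \leq 2$ over $Q \in l_1^{\infty}(\mathbb{F}_q)$: since each $(q-1)$-line different from $l_1^{\infty}$ meets $l_1^{\infty}$ in exactly one $\mathbb{F}_q$-point, one obtains $a_{q-1} = \sum_{Q \in l_1^{\infty}}\psi_{q-1}(Q) \leq 2(q+1)$, while Lemma \ref{LemmaPR3} gives $a_{q-1} \geq 3(q-1)$; together these force $q \leq 5$, a final contradiction. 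The main technical hurdle is the careful bookkeeping behind $k_l \leq q-r$ — distinguishing whether $l \cap l_{\infty}$ is an $\mathcal{X}$-point or a $Z$-point when translating the Proposition \ref{PropQ} hypothesis into a bound on $k_l$ itself — and spotting the clean factorization that makes the endgame inequality transparent.
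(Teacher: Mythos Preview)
Your proposal is correct and follows essentially the same approach as the paper. The paper argues directly rather than by contradiction: it uses $a_{q-1}\geq 3(q-1)>2(q+1)$ to locate some $Q\in l_1^\infty$ with $r=\psi_{q-1}(Q)\geq 3$, then combines Proposition~\ref{PropQ} and Corollary~\ref{CorRQ2} to obtain the same inequality $q(r-2)\leq r(r-1)-1$ and deduce $r=q-1$, which yields a second $0$-line and hence $a_0=3$ via Corollary~\ref{Cor1}. Your contrapositive organization and the factorization $(r-2)(q-1-r)\leq 1$ are a slightly cleaner repackaging of the same computation, but the ingredients are identical.
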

    \begin{proof}
        By Lemma \ref{LemmaPR3}, $a_{q - 1} \geq 3(q - 1)$. Since $q \geq 7$, if  $\psi_{q - 1}(Q) \leq 2$ for every point $Q \in l_1^{\infty}$ then $a_{q - 1} \leq 2(q + 1) < 3(q - 1)$, a contradiction. Hence, there is a point $Q \in l_1^{\infty}$ such that $r = \psi_{q - 1}(Q) \geq 3$. Also, by Lemma \ref{PropQ} and Corollary \ref{CorRQ2}, the other lines have at most $q - r$ points of $\mathcal{X}(\mathbb{F}_q)$. This implies that
        \begin{align*}
            q^2 - 2q + 1 = (q - 1)^2 & \leq 0 + r(q - 1) + (q - r)(q - r)  \\
            & = qr - r + q^2 - 2qr + r^2 \\
            & = q^2 - qr + r(r - 1).
        \end{align*}
        Hence $q(r - 2) \leq r(r - 1) - 1$. Since $q \geq 7$, we must have $r \geq 6$. So
    	$$q \leq \dfrac{r(r - 1) - 1}{r - 2} = \dfrac{(r + 1)(r - 2) + 1}{r - 2} = r + 1 + \dfrac{1}{r - 2}.$$
    	Since $r \leq q - 1$, this implies that $r = q - 1$. Therefore, $Q$ is of type $(q - 1)^{q - 1}0^2$. Then the result follows from Corollary \ref{Cor1}.
        \end{proof}
    	 Let $k_0 = \min\{i\ |\ a_i \neq 0\}$. By the previous Corollary, in order to prove our main result for $q \geq 7$, it is enough to show that $k_0 = 0$.  We start by giving an upper bound for  $k_0$.
    \begin{lem}\label{Lemmak_0Q4}
      We have that   $k_0 \leq q - 4$.
    \end{lem}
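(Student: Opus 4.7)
The plan is to argue by contradiction: assume $k_0 \geq q-3$, so that $a_i = 0$ for every $i \in \{0, 1, \ldots, q-4\}$. Under this hypothesis only the three values $a_{q-3}, a_{q-2}, a_{q-1}$ can be nonzero, and the three identities of Lemma~\ref{PropA_i} reduce to a $3 \times 3$ linear system in these three unknowns. Since the coefficient matrix is essentially a Vandermonde matrix in the evaluation points $q-3, q-2, q-1$, the system is nonsingular and admits a unique solution.

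The first step is to simplify the system. Subtracting $(q-3)$ times part (i) from part (ii) eliminates $a_{q-3}$ and gives
$$a_{q-2} + 2\,a_{q-1} = q^2 + q + 4,$$
whence $a_{q-1} - a_{q-3} = 3$ after a further combination with (i). Inserting these two relations into identity (iii) leaves a single linear equation in the unknown $a_{q-3}$. Solving the resulting system produces the explicit values
$$a_{q-3} = \frac{3(q-1)(q-2)}{2}, \qquad a_{q-1} = a_{q-3} + 3, \qquad a_{q-2} = -2(q-1)(q-4).$$

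For every $q \geq 5$, the value $a_{q-2} = -2(q-1)(q-4)$ is strictly negative, which contradicts the fact that $a_{q-2}$ is the cardinality of the set $\mathcal{A}_{q-2}$ and must therefore be a nonnegative integer. Hence the assumption $k_0 \geq q-3$ is untenable, and $k_0 \leq q-4$, as claimed. The argument is essentially linear-algebraic bookkeeping on the counting identities of Lemma~\ref{PropA_i}, and I do not foresee any real obstacle; the only substantive point is to verify that the explicit formula for $a_{q-2}$ remains negative throughout the range $q \geq 5$, which is immediate. (One can also note the consistency with Lemma~\ref{LemmaPR3}, since the derived value $a_{q-1} = a_{q-3} + 3 = \tfrac{3(q-1)(q-2)}{2} + 3$ comfortably exceeds the lower bound $3(q-1)$.)
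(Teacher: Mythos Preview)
Your proof is correct and follows essentially the same approach as the paper: assume $k_0\geq q-3$, solve the $3\times 3$ linear system arising from Lemma~\ref{PropA_i}, and observe that the resulting value $a_{q-2}=-2(q-1)(q-4)=-2(q^2-5q+4)$ is negative for $q\geq 5$. The paper presents the same computation with the same contradiction, only without your additional remarks on the Vandermonde structure and the consistency check against Lemma~\ref{LemmaPR3}.
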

    \begin{proof}
        Suppose that $k_0 \geq q - 3$. Hence, by Lemma \ref{PropA_i}, we have
        \begin{itemize}
            \item[(1)] $a_{q - 3} + a_{q - 2} + a_{q - 1} = q^2 + q + 1$;
            \item[(2)] $(q - 3)a_{q - 3} + (q - 2)a_{q - 2} + (q - 1)a_{q - 1} = (q + 1)(q - 1)^2$;
            \item[(3)] $(q - 3)(q - 4)a_{q - 3} + (q - 2)(q - 3)a_{q - 2} + (q - 1)(q - 2)a_{q - 1} = q(q - 2)(q - 1)^2$.
    	\end{itemize}
        A direct computation shows that the system above implies that $2a_{q - 3} = 3(q^2 - 3q + 2)$, $a_{q - 2} = -2(q^2 - 5q + 4)$ and $2a_{q - 1} = 3(q^2 - 3q + 4)$. Since $q \ge 5$, then 
        $$a_{q - 2} = -2(q^2 - 5q + 4) < 0,$$
        a contradiction. Therefore, $k_0 \leq q - 4$.
    \end{proof}
    We now prove a lower bound for $k_0$ whenever $k_0 \neq 0$. We start with the following lemma:
    \begin{lem}\label{Lemaik0}
        $\sum_{i = k_0}^{q - 1} (i - k_0)(i - q + 2)a_i = 3(q - 1)^2 - 3k_0$. In particular, 
        $$(q - k_0 - 1)a_{q - 1} \geq 3(q - 1)^2 - 3k_0.$$
    \end{lem}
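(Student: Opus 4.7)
The plan is a direct computation exploiting the three identities in Lemma \ref{PropA_i}. First I would rewrite the coefficient as
\[
(i - k_0)(i - q + 2) = i(i - 1) + (3 - k_0 - q)\,i + k_0(q - 2),
\]
so that the target sum becomes a fixed linear combination of $\sum_i i(i-1)a_i$, $\sum_i i\,a_i$ and $\sum_i a_i$. Since $a_i = 0$ for $i < k_0$, extending each sum down to $i = 0$ (respectively $i = 1, 2$) changes nothing, and the three identities of Lemma \ref{PropA_i} can be plugged in verbatim.

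Substituting yields
\[
q(q-2)(q-1)^2 + (3 - k_0 - q)(q+1)(q-1)^2 + k_0(q-2)(q^2+q+1).
\]
Grouping the $(q-1)^2$ terms gives $(q-1)^2\bigl[q(q-2) + (3 - k_0 - q)(q+1)\bigr] = (q-1)^2\bigl(3 - k_0(q+1)\bigr)$, while the remaining $k_0$-contribution is $k_0\bigl[(q-2)(q^2+q+1) - (q+1)(q-1)^2\bigr]$. A short calculation shows $(q+1)(q-1)^2 - (q-2)(q^2+q+1) = 3$, so everything collapses to $3(q-1)^2 - 3k_0$, the identity claimed.

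For the ``in particular'' part, I would observe that the coefficient $(i - k_0)(i - q + 2)$ is $\leq 0$ on the range $k_0 \leq i \leq q - 2$ (the two factors have opposite signs there, and vanishes at $i = q-2$), while at $i = q - 1$ it equals the positive quantity $q - k_0 - 1$. Since each $a_i$ is non-negative, the non-positive contributions can only lower the left-hand side, so
\[
(q - k_0 - 1)\,a_{q - 1} \;\geq\; \sum_{i = k_0}^{q - 1}(i - k_0)(i - q + 2)\,a_i \;=\; 3(q - 1)^2 - 3k_0,
\]
which is the desired inequality. The whole argument is essentially bookkeeping; the only conceptual point, and the ``obstacle'' such as there is one, is recognising that the specific quadratic $(i - k_0)(i - q + 2)$ is the right combination to form: it has leading coefficient $1$ (matching $i(i-1)$ in Lemma \ref{PropA_i}(iii) up to a linear correction), it vanishes at $i = q-2$, and its non-negative coefficient is supported at $i = q - 1$, which is exactly what is needed to isolate $a_{q-1}$.
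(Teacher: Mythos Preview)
Your proof is correct and follows essentially the same approach as the paper's: the same expansion of $(i-k_0)(i-q+2)$ into the linear combination of $i(i-1)$, $i$, and $1$, the same substitution of the three identities from Lemma~\ref{PropA_i}, and the same sign analysis of the coefficients for the inequality. The only minor difference is that the paper invokes Lemma~\ref{Lemmak_0Q4} to note $k_0 \leq q-4$ (hence $q-k_0-1>0$), whereas you assert positivity of $q-k_0-1$ without citation; but this is inessential, since your inequality argument does not actually require that coefficient to be positive.
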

    \begin{proof}
        First, note that
        \begin{align*}
            (i - k_0)(i - q + 2) & = i(i - 1) + i(3 - q) + k_0(q - 2 - i) \\
            & = i(i - 1) + i(3 - q) -i k_0 + k_0(q - 2) \\
            & = i(i - 1) + i(3 - q - k_0) + k_0(q - 2).
        \end{align*}
        By Lemma \ref{PropA_i}, we have 
        \begin{align*}
            \dsum_{i = k_0}^{q - 1} (i - k_0)(i - q + 2)a_i & = \dsum_{i = k_0}^{q - 1} i(i - 1)a_i + (3 - q - k_0)\dsum_{i = k_0}^{q - 1} ia_i + k_0(q - 2)\dsum_{i = k_0}^{q - 1} a_i \\
        	& = q(q - 2)(q - 1)^2 + (3 - q - k_0)(q + 1)(q - 1)^2 + \\
                &\ \ \ \ \ + k_0(q - 2)(q^2 + q + 1) \\
                & = 3(q - 1)^2 - 3k_0.
        \end{align*}
        Moreover, if $i = q - 1$ then $(i - k_0)(i - q + 2) = (q - k_0 - 1)$ and if $k_0 \leq i \leq q - 2$ then $(i - k_0)(i - q + 2) \leq 0$. Also, by Lemma \ref{Lemmak_0Q4}, $k_0 \leq q - 4$. Hence, $3(q - 1)^3 - 3k_0 \geq 0$. Therefore, $(q - k_0 - 1)a_{q - 1} \geq 3(q - 1)^2 - 3k_0$.
    \end{proof}
    \begin{prop}\label{q7k0neq1}
    Let $q \geq 7$. If $k_0 \neq 0$, then $k_0 \geq 2$.
    \end{prop}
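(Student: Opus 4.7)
I would argue by contradiction: suppose $k_0 = 1$, and fix a $1$-line $l_1$ with $l_1 \cap \mathcal{X}(\mathbb{F}_q) = \{P\}$ and $l_1 \setminus \{P\} = \{Q_1, \dots, Q_q\} \subset Z(\mathcal{X})$. The first step is to pin down the type of $P$. Applying Lemma \ref{LemmaType}(i) and bounding each ``middle'' line through $P$ (those of size between $2$ and $q-2$) by $q-3$ in the identity $\psi_{q-1}(P)(q-2) + \sum_{i=2}^{q-2} r_i(i-1) = q(q-2)$ gives $\psi_{q-1}(P) \geq 3 + \psi_1(P)(q-3)$. Combined with $\psi_1(P) \geq 1$ and $\psi_1(P) + \psi_{q-1}(P) \leq q+1$, this forces $\psi_1(P) = 1$ and $\psi_{q-1}(P) = q$: the point $P$ has type $(q-1)^q 1^1$. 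The $q$ lines of size $q-1$ through $P$, call them $m_1, \dots, m_q$, then carry exactly the $2q$ points of $Z(\mathcal{X}) \setminus l_1$.

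Next, for each $Q_i$ I set $r_i := \psi_{q-1}(Q_i)$. When $r_i \geq 2$, Corollary \ref{CorRQ2} produces a line $L_i$ (not through $Q_i$) collecting the $r_i$ ``other'' $Z$-points on the $(q-1)$-lines through $Q_i$. I apply Proposition \ref{PropQ} with $Q_0 = Q_i$ and $l_{\infty} = L_i$: every line $l$ through $Q_i$ different from these $r_i$ lines must satisfy $\#(l \setminus L_i) \cap \mathcal{X}(\mathbb{F}_q) \leq q - r_i - 1$, whence $|l \cap \mathcal{X}(\mathbb{F}_q)| \leq q - r_i - 1 + \epsilon_l$ with $\epsilon_l = 1$ exactly when $l \cap L_i \in \mathcal{X}$. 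Letting $M$ denote the number of these ``other'' lines with $\epsilon_l = 1$, one has $M \leq q - r_i$, and
\[
\sum_{\text{other } l \ni Q_i}|l \cap \mathcal{X}(\mathbb{F}_q)| \;\leq\; (q-r_i)(q-r_i-1) + M.
\]
Combining this with $\sum_{l \ni Q_i}|l \cap \mathcal{X}(\mathbb{F}_q)| = (q-1)^2$ collapses the estimate to the clean inequality $r_i(q - r_i + 1) \leq 2q$, which for $q \geq 7$ forces $r_i \in \{0,1,2\}$.

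The last step is a global count plus an integrality trick. Every $(q-1)$-line not through $P$ meets $l_1$ at a unique $Q_i$, so $\sum_i r_i = a_{q-1} - q$; by Lemma \ref{Lemaik0} with $k_0 = 1$ one has $a_{q-1} \geq 3q$, whence $\sum_i r_i \geq 2q$. Combined with $r_i \leq 2$, equality holds throughout: every $r_i$ equals $2$ and $a_{q-1} = 3q$. This is the equality case of Lemma \ref{Lemaik0}, which forces $a_i = 0$ for $2 \leq i \leq q-3$, so every line in the plane has size in $\{1, q-2, q-1\}$. Through each $Q_i$ the $q-2$ ``other'' lines thus have size in $\{1, q-2\}$; letting $x$ count those of size $q-2$, the sum constraint yields $x(q-3) = (q-1)(q-4)$, i.e.,
\[
x \;=\; (q-2) - \frac{2}{q-3},
\]
which is never an integer for $q \geq 7$ --- the desired contradiction. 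The main obstacle is the double-count in Step~2 that produces $r_i(q - r_i + 1) \leq 2q$, as it requires carefully matching the $q+1$ lines through $Q_i$ with the $q+1$ points of $L_i$ and tracking both values of $\epsilon_l$; once this bound is in hand, the equality case of Lemma \ref{Lemaik0} together with the $\{1,q-2\}$-dichotomy on ``other'' line sizes produces the final non-integrality.
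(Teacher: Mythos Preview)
Your argument follows the paper's strategy closely through the first three stages: pin down the type of $P$ as $(q-1)^q1^1$, use Proposition~\ref{PropQ} and Corollary~\ref{CorRQ2} to bound $r_i=\psi_{q-1}(Q_i)$, deduce $a_{q-1}=3q$ and $a_2=\cdots=a_{q-3}=0$, and finish with an integrality obstruction. Two small points in Step~2 deserve tightening. First, your displayed bound $\sum_{\text{other}}\le (q-r_i)(q-r_i-1)+M$ with $M\le q-r_i$ is consistent only if ``other'' means the $q-r_i$ lines through $Q_i$ that are neither among the $r_i$ chosen $(q-1)$-lines nor $l_1$ itself (then $M\le q-r_i$ is the trivial bound, and the $1$ from $l_1$ is added separately); your text however says ``different from these $r_i$ lines'', which would give $q+1-r_i$ terms. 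Second, the quadratic inequality $r_i(q+1-r_i)\le 2q$ does not by itself exclude $r_i\in\{q-1,q\}$; you need the one-line observation $r_i(q-1)+1\le(q-1)^2$, hence $r_i\le q-2$, before concluding $r_i\le 2$. Both are easy fixes.

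Where your proof genuinely diverges from the paper is the final integrality step, and here your version is \emph{strictly better}. The paper solves the global system for $a_1$ and obtains $a_1=3+6/(q-3)$, then asserts this is never an integer for $q\ge 7$; but that claim fails at $q=9$, where $q-3=6$ divides $6$ and $a_1=4$. Your local count at a fixed $Q_i$ yields instead $x=(q-2)-2/(q-3)$, and since $q-3>2$ for every prime power $q\ge 7$, this is never an integer. So your argument not only works but actually closes a gap in the paper's proof at $q=9$.
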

    \begin{proof}
        Suppose that $k_0 = 1$. Let $l_1 \in \mathcal{A}_1$. Also, consider $P_0 = l_1 \cap \mathcal{X}(\mathbb{F}_q)$ and 
        $$l_1 \cap Z(\mathcal{X}) = \{Q_1, ..., Q_q\}.$$
        By Corollary \ref{Lema3}, $P_0$ is of type $(q - 1)^q1^1$. If $r_i := \psi_{q - 1}(Q_i)$ for $i = 1, ..., q$ then 
        $$r_i(q - 1) + 1 \leq \text{N}_q(\mathcal{X}) = (q - 1)^2$$
        Hence, $r_i \leq q - 2$. If $r_i \geq 3$, by Lemma \ref{PropQ} and Corollary \ref{CorRQ2}, the lines in $\check{Q}_i(\mathbb{F}_q) \backslash \mathcal{A}_{q - 1}$ have at most $q - r_i$ points in $\mathcal{X}(\mathbb{F}_q)$. On the other hand, we have
        \begin{align*}
            q^2 - 2q + 1 = (q - 1)^2 & \leq 1 + r_i(q - 1) + (q - r_i)(q - r_i) \\
            & = 1 + r_iq - r_i + q^2 - 2qr_i + r_i^2 \\
        	& = q^2 + 1 - qr_i + r_i(r_i - 1),
        \end{align*}
        hence, $q (r_i - 2) \leq (r_i - 1)r_i.$ Since $q \geq 7$, we must have $r_i \geq 6$. So
        $$q \leq \dfrac{(r_i - 1)r_i}{(r_i - 2)} = \dfrac{(r_i + 1)(r_i - 2) + 2}{r_i - 2} = r_i + 1 + \dfrac{2}{r_i - 2}.$$
        This implies that $r_i \geq q - 1$, a contradiction. Therefore $r_i \leq 2$ for $i = 1, ..., q$, hence, $a_{q - 1} \leq q + 2q = 3q$. By Lemma \ref{Lemaik0}, $(q - 2)a_{q - 1} \geq 3(q - 1)^2 - 3 = 3q(q - 2).$ So $a_{q-1} = 3q$. By Lemma \ref{Lemaik0}, we have
        $$\sum_{i = 1}^{q - 1} (i - 1)(i - q + 2)a_i = 3q(q - 2).$$
        Since $(q - 2)a_{q - 1} = 3q(q - 2)$, this implies that $a_2 = \cdots = a_{q - 3} = 0$. By Lemma \ref{PropA_i}, we have
        \begin{itemize}
            \item[(1)] $a_1 + a_{q - 2} + 3q = q^2 + q + 1$,
        	\item[(2)] $a_1 + (q - 2)a_{q - 2} + 3q(q - 1) = (q + 1)(q - 1)^2.$ 
        \end{itemize}
        A direct computation shows that the system above implies that $(q - 3)a_1 = 3(q - 1)$ and $(q - 3)a_{q - 2} = q(q^2 - 5q + 4)$. Note that
        $$a_1 = \dfrac{3(q - 1)}{q - 3} = 3 + \dfrac{6}{q - 3}$$
        Since $q \geq 7$ then $(q - 3) \nmid 6$, but $a_1$ is a integer, a contradiction. Therefore, $k_0 \geq 2$.
    \end{proof}

\section{Characterization of optimal Sziklai curves of degree $q - 1$}

        In this section, we provide the characterization of optimal Sziklai curves of degree $q-1$. First, we deal with the cases $q =5,7$, as they need  some \emph{ad hoc} techniques.

        \begin{prop}\label{q=5}
            If $\mathcal{X} \in C_4(\mathbb{F}_5)$ and $\text{N}_5(\mathcal{X}) = 16$, then there exist $\alpha, \gamma, \beta \in \mathbb{F}_5^{*}$ with $\alpha + \beta + \gamma = 0$ such that $\mathcal{X} \simeq_{{\rm proj}} \mathcal{C}_{(\alpha, \beta, \gamma)}$ over $\mathbb{F}_5$.
        \end{prop}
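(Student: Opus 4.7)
The plan is to rule out all possibilities for $Z(\mathcal{X}) := \mathbb{P}^2(\mathbb{F}_5) \setminus \mathcal{X}(\mathbb{F}_5)$ except those consisting of the $\mathbb{F}_5$-points of three non-concurrent lines, so that Proposition \ref{Z(X)XYZ} immediately yields $\mathcal{X} \simeq_{\rm proj} \mathcal{C}_{(\alpha,\beta,\gamma)}$. By Lemma \ref{Lemmak_0Q4} we already know $k_0 \leq 1$, so the two tasks are (i) to force $k_0 = 0$, and (ii) to extract the triangle from $Z(\mathcal{X})$ afterwards.

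To attack (i), I would first argue that $\mathcal{X}$ is in fact nonsingular. By Remark \ref{RemarkSing}, $\mathcal{X}$ is absolutely irreducible and smooth at every $\mathbb{F}_5$-rational point; a non-rational singular point, appearing in a Galois orbit, would strictly lower the geometric genus below $\binom{d-1}{2} = 3$, and an application of Serre's refinement of the Hasse--Weil bound would contradict $N_5(\mathcal{X}) = 16$. Hence $\mathcal{X}$ is a nonsingular plane quartic of genus $g = 3$, and $N_5(\mathcal{X}) = 16$ meets the known maximum for $(g,q) = (3,5)$. This places $\mathcal{X}$ within the scope of Lauter's classification \cite{Lauter} of $L$-polynomials of genus-$3$ curves over $\mathbb{F}_5$ with small Hasse--Weil defect, producing a short explicit list of candidate zeta functions, and in particular a short list of possible values of $N_{25}(\mathcal{X})$.

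Now assume $k_0 = 1$ for contradiction and fix a $1$-line $l_1 \in \mathcal{A}_1$. Set $P_0 = l_1 \cap \mathcal{X}(\mathbb{F}_5)$ and $\{Q_1,\ldots,Q_5\} = l_1 \cap Z(\mathcal{X})$; by Corollary \ref{Lema3}, $P_0$ has type $(q-1)^q 1^1$. Combining this with Lemmas \ref{LemmaType}--\ref{Lemaik0} severely restricts both the types of the $Q_i$ and the integer parameters $a_i$, leaving only a handful of admissible $(a_0,\ldots,a_4)$-profiles. For each surviving profile I would compute $N_{25}(\mathcal{X})$ directly, by summing over the $\mathbb{F}_5$-lines of $\mathbb{P}^2$ the contribution from their degree-$2$ extensions intersected with $\mathcal{X}$, and compare with Lauter's list. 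The expectation is that no profile survives, forcing $k_0 = 0$.

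Once $k_0 = 0$, fix a line $l_\infty \subseteq Z(\mathcal{X})$. Since $|Z(\mathcal{X}) \setminus l_\infty| = 15 - 6 = 9$, and each line through any point $Q \in l_\infty$ meets $\mathcal{X}(\mathbb{F}_5)$ in at most $q - 1 = 4$ points, an application of Proposition \ref{PropQ} and Corollary \ref{CorRQ2}, together with the extra $L$-polynomial information refining the reasoning of Corollary \ref{q7a03}, should yield two further lines of $Z(\mathcal{X})$ forming a triangle with $l_\infty$; Proposition \ref{Z(X)XYZ} then concludes the proof. The most delicate step I anticipate is the case analysis for $k_0 = 1$: the arc-theoretic inequalities alone leave several numerically feasible profiles open, and only by pairing them with the Lauter-constrained value of $N_{25}(\mathcal{X})$ do all of them fail.
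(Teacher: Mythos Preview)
Your overall strategy coincides with the paper's: use Lauter's $L$-polynomial information to eliminate $k_0=1$ via $N_{25}(\mathcal{X})$, then finish $k_0=0$ combinatorially. The crucial link between the arc data and $N_{25}$, however, is where your sketch has a genuine gap. You propose to ``compute $N_{25}(\mathcal{X})$ directly, by summing over the $\mathbb{F}_5$-lines the contribution from their degree-$2$ extensions,'' but the profile $(a_0,\ldots,a_4)$ does \emph{not} determine these contributions: for a $1$-line $l$ through its unique rational point $P_0$, the residual part of the divisor $l\cdot\mathcal{X}$ could a~priori be an $\mathbb{F}_{125}$-orbit of length three (giving no new $\mathbb{F}_{25}$-points) rather than an $\mathbb{F}_{25}$-conjugate pair. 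The paper closes this with two observations you do not mention. First, since $P_0$ has type $4^51^1$, every $4$-line through $P_0$ already accounts for four simple rational intersections, so the tangent at $P_0$ must be the $1$-line, whence $l\cdot\mathcal{X}=2P_0+P_1+P_2$. Second, equality in the St\"ohr--Voloch bound (Theorem~\ref{SVolochTheo}) forces $A(P_0)=0$, i.e.\ $j_2(P_0)=2$, so $P_0$ is not an inflection and $P_1\neq P_2$ are a genuine $\mathbb{F}_{25}$-conjugate pair. Only with this in hand does $a_1\ge 3$ (from the linear system in Lemma~\ref{PropA_i}) give $N_{25}(\mathcal{X})\ge 16+2\cdot 3=22$, contradicting the value $N_{25}=20$ forced by the zeta type $[4,4,2]$.

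For the $k_0=0$ step, the paper uses no zeta-function input at all, so your plan to invoke ``extra $L$-polynomial information refining the reasoning of Corollary~\ref{q7a03}'' is heavier than necessary and leaves unclear what that refinement would be. The actual argument is short and purely combinatorial: if $a_1\neq 0$, pick $l_0\in\mathcal{A}_0$ and $l_1\in\mathcal{A}_1$ meeting at $Q\in Z(\mathcal{X})$; a type count forces $\psi_4(Q)=3$ and $\psi_3(Q)=1$, but Proposition~\ref{PropQ} then forbids any $3$-line through $Q$, so $a_1=0$. With $a_1=0$ the equations of Lemma~\ref{PropA_i} give $3a_0=21-a_4$, and a brief type analysis at a point of a $2$-line rules out $a_0=1$. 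Hence $a_0\ge 2$, and Corollary~\ref{Cor1} delivers the three lines directly.
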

        \begin{proof}
            By Lemma \ref{Lemmak_0Q4}, we have $k_0 \leq 1$. Suppose that $k_0 = 1$. Let $l_1 \in \mathcal{A}_1$ with $l_1 \cap \mathcal{X}(\mathbb{F}_q) = \{P_0\}$. By Corollary \ref{Lema3}, $P_0$ is of type $4^51^1$. In this case, $l_1$ is the tangent line to $\mathcal{X}$ at $P_0$. Since $\text{deg}(\mathcal{X}) = 4$, then the divisor 
            $$l_1 \cdot \mathcal{X} := \sum \text{I}(P, l_1 \cap \mathcal{X}) P =  2P_0 + P_1 + P_2$$
            for some points $P_1, P_2 \in l_1 \cap \mathcal{X}$, where $\text{I}(P, l_1 \cap \mathcal{X})$ is the intersection multiplicity of $l_1$ and $\mathcal{X}$ at $P$. Since the divisor $l_1 \cdot \mathcal{X}$ is defined over $\mathbb{F}_q$, applying the $5$-Frobenius map $\Psi_5$, we have $P_1 + P_2 + 2P_0 = \Psi_5(P_1) + \Psi_5(P_2) + 2P_0$, which implies that $P_1, P_2 \in \mathbb{P}^2(\mathbb{F}_{25})$. If $P_1 = P_2$ then $P_1 = P_2 = P_0$ and $P_0$ is a inflexion point. On the other hand, by Stohr-Voloch's Theorem \ref{SVolochTheo}, we have
            $$32 = 2\text{N}_q(\mathcal{X}) \leq 32 - \sum_{P \in \mathcal{X}} A(P).$$
            This implies that $0 = A(P_0) = j_2(P_0) - 2$, hence, $\text{I}(P_0, l_1 \cap \mathcal{X}) = j_2(P_0) = 2,$ a contradiction. So $P_1 \neq P_2$ and $P_1, P_2 \in \mathbb{P}^2(\mathbb{F}_{25}) \backslash \mathbb{P}^2(\mathbb{F}_{5})$. By Lemma \ref{PropA_i}, we have
            \begin{itemize}
            	\item[(i)] $a_1 + a_2 + a_3 + a_4 = 31;$
            	\item[(ii)] $a_1 + 2a_2 + 3a_3 + 4a_4 = 96;$
            	\item[(iii)] $2a_2 + 6a_3 + 12a_4 = 240.$
            \end{itemize}
            A direct computation shows that the system above implies that $a_1 = 21 - a_4$, $a_2 = 3(a_4 - 15)$ and $a_3 = 55 - 3a_4$. Since $a_1, a_2, a_3 \geq 0$, then $15 \leq a_4 \leq 18$. Hence $a_1 \geq 3$. Follows from the above that $a_1 \geq 3$ implies that $\text{N}_{25}(\mathcal{X}) \geq 16 + 3*2 = 22$. On the other hand, let $L(t) = \prod_{i = 1}^6(1 - \omega_it)$ the $L$-polynomial of $\mathcal{X}$ into linear factors in some finite extension of $\mathbb{Q}$. By \cite[Theorem 9.10]{Torres}, we have
            $$\text{N}_{q^n}(\mathcal{X}) = q^n + 1 - \sum_{i = 1}^{2g} \omega_i^n.$$
            Since $16 = \text{N}_5(\mathcal{X}) = 5 + 1 + 3 \cdot 4 - 2 = 5 + 3 \cdot 4 - 1$. As mentioned in \cite[fact 3.3]{Lauter}, the curve $\mathcal{X}$ has zeta function of type $[4, 4, 2]$, this means that
            $$\omega_1 + \overline{\omega_1} = -4,\ \omega_2 + \overline{\omega_2} = -4 \text{ and } \omega_3 + \overline{\omega_3} = -2.$$
            Hence, $\sum_{i = 1}^6 \omega_i^2 = 36 - 2(|\omega_1|^2 + |\omega_2|^2 + |\omega_3|^2).$
            By \cite[Theorem 9.16]{Torres}, $|\omega_i|^2 = 5$. This implies that
            $$22 \leq \text{N}_{25}(\mathcal{X}) = 26 - \sum_{i = 1}^{6} \omega_i^2 = 26 - (36 - 30) = 20,$$
            a contradiction. Then $k_0 = 0$. 
            
            Suppose that $a_1 \neq 0$. Let $l_0 \in \mathcal{A}_0$ and $l_1 \in \mathcal{A}_1$. By Corollary \ref{Lema3}, we have 
            $$Q = l_0 \cap l_1 \in Z(\mathcal{X}).$$
            If $r_Q = \psi_4(Q)$, by Lemma \ref{LemmaType}, we have $16 \leq 0 + 1 + 4r + 3(4 - r) = 13 + r,$ hence, $r \geq 3$. This means that $Q$ is of type $4^33^11^10^1$. On the other hand, since $\psi_4(Q) \geq 3$, by Proposition \ref{PropQ}, we must have $\psi_{3}(Q) = 0$, a contradiction. So $a_1 = 0$. By Lemma \ref{PropA_i}, we have
            \begin{itemize}
            	\item[(1)] $a_0 + a_2 + a_3 + a_4 = 31;$
            	\item[(2)] $2a_2 + 3a_3 + 4a_4 = 96;$ 
            	\item[(3)] $2a_2 + 6a_3 + 12a_4 = 240.$
            \end{itemize}
            A direct computation shows that the system above implies that $3a_0 = 21 - a_4$, $a_2 = 2(a_4 - 12)$ and $3a_3 = 144 - 8a_4$. If $a_0 = 1$, then $a_2 = 12$ and $a_3 = 0$. Let $l_2 \in \mathcal{A}_2$ and $P_0 \in l_2 \cap \mathcal{X}(\mathbb{F}_5)$. If $r = \psi_4(P_0)$, by Lemma \ref{LemmaType}, we have $16 \leq 2 + 3r + 2(5 - r) = 12 + r.$ Since $2 + 3r \leq 16$, this implies that $r = 4$. Hence, $\psi_2(P_0) = 1$, $\psi_3(P_0) = 1$ and $\psi_4(P_0) = 4$, a contradiction. Therefore, $a_0 \geq 2$ and the result follows Corollary \ref{Cor1}.
        \end{proof}
        
        \begin{prop}\label{q=7}
            If $\mathcal{X} \in C_6(\mathbb{F}_7)$ and $\text{N}_7(\mathcal{X}) = 36$, then there exist $\alpha, \gamma, \beta \in \mathbb{F}_7^{*}$ with $\alpha + \beta + \gamma = 0$ such that $\mathcal{X} \simeq_{{\rm proj}} \mathcal{C}_{(\alpha, \beta, \gamma)}$ over $\mathbb{F}_7$.
        \end{prop}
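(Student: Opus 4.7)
The plan is to show that $k_0 := \min\{i : a_i \neq 0\} = 0$ and then apply Corollary \ref{q7a03} together with Proposition \ref{Z(X)XYZ}. By Lemma \ref{Lemmak_0Q4} we have $k_0 \leq 3$, and by Proposition \ref{q7k0neq1} we have $k_0 \neq 1$; hence $k_0 \in \{0, 2, 3\}$.

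The case $k_0 = 0$ already yields the conclusion: any $l \in \mathcal{A}_0$ satisfies $l(\mathbb{F}_7) \subseteq Z(\mathcal{X})$, so Corollary \ref{q7a03} gives $a_0 = 3$. Denote the three external lines by $l_1, l_2, l_3 \in \mathcal{A}_0$, so that $\bigcup_{i=1}^{3} l_i(\mathbb{F}_7) \subseteq Z(\mathcal{X})$ with $|Z(\mathcal{X})| = q^2 + q + 1 - (q-1)^2 = 3q = 21$. If the three lines were concurrent, their union would contain $3q + 1 = 22$ points, exceeding $|Z(\mathcal{X})|$; hence the $l_i$ are non-concurrent, and the equality $|l_1(\mathbb{F}_7) \cup l_2(\mathbb{F}_7) \cup l_3(\mathbb{F}_7)| = 3q = 21$ forces $Z(\mathcal{X}) = l_1(\mathbb{F}_7) \cup l_2(\mathbb{F}_7) \cup l_3(\mathbb{F}_7)$. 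Since ${\rm PGL}(3, 7)$ acts transitively on triples of non-concurrent $\mathbb{F}_7$-lines, we may choose coordinates so that $l_1 = \VarP(X)$, $l_2 = \VarP(Y)$, $l_3 = \VarP(Z)$, and Proposition \ref{Z(X)XYZ} then gives $\mathcal{X} \simeq_{{\rm proj}} \mathcal{C}_{(\alpha, \beta, \gamma)}$ for suitable $\alpha, \beta, \gamma \in \mathbb{F}_7^{*}$ with $\alpha + \beta + \gamma = 0$.

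The remaining task is to rule out $k_0 \in \{2, 3\}$. By Lemma \ref{LemaArc}, $\mathcal{X}(\mathbb{F}_7)$ is a $(36, 6)$-arc in $\mathbb{P}^2(\mathbb{F}_7)$. I would first combine the three identities of Lemma \ref{PropA_i} with the constraint coming from Lemma \ref{Lemaik0} to reduce each case to a small finite list of numerically admissible spectra $(a_0, \ldots, a_6)$; for instance, for $k_0 = 3$ the identities force $a_4 = 3 a_6 - 99$, $a_5 = 108 - 3 a_6$, $a_3 = 48 - a_6$ with $33 \leq a_6 \leq 36$. I would then invoke the classification of $(36, 6)$-arcs in $\mathbb{P}^2(\mathbb{F}_7)$ from \cite{ArcsPG27}, which lists all such arcs up to projective equivalence, and verify that every arc in the classification admits at least one external line, i.e., satisfies $a_0 \geq 1$. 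This rules out $k_0 \in \{2, 3\}$ and reduces to the previous case.

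The main obstacle lies in the last step: the combinatorial identities alone leave a multi-parameter family of admissible spectra for $k_0 = 2$ and $k_0 = 3$, so an argument purely analogous to Proposition \ref{q7k0neq1} is not enough to force $k_0 = 0$. This is precisely why the paper relies on the (computer-assisted) classification of $(36, 6)$-arcs from \cite{ArcsPG27}; once that external structural result is available, the clean combinatorial argument of the second paragraph finishes the case $q = 7$.
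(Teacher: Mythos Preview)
Your overall reduction (show $k_0 = 0$, then conclude via Corollary~\ref{q7a03} and Proposition~\ref{Z(X)XYZ}) matches the paper, but the key step contains a factual error. You propose to verify from the classification in \cite{ArcsPG27} that every $(36,6)$-arc in $\mathbb{P}^2(\mathbb{F}_7)$ has $a_0 \geq 1$; this is false. Among the $194$ inequivalent arcs, exactly six have $a_0 = a_1 = 0$: one with $k_0 = 3$ and five with $k_0 = 2$. The classification therefore does not by itself rule out $k_0 \in \{2,3\}$, and your proposed ``verification'' would simply fail.

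What the paper actually does is show that none of these six exceptional arcs can occur as $\mathcal{X}(\mathbb{F}_7)$ for a sextic without $\mathbb{F}_7$-linear components, using the \emph{curve} structure rather than just the arc structure. For the unique arc with $k_0 = 3$, the paper writes it out explicitly, finds auxiliary plane curves $\VarP(G)$ and $\VarP(H)$ of degrees $6$ and $4$ meeting transversally in $24$ arc points, applies Noether's ``$AF+BG$'' Theorem~\ref{Noether} to write any sextic through them as $G + Q\cdot H$ with $Q$ a quadratic form, and then uses six further arc points to force $Q \equiv 0$; the only sextic through the arc is thus $G$, which splits into lines. For each of the five arcs with $k_0 = 2$, the paper exhibits a point $Q_0 \in Z(\mathcal{X})$ with $\psi_6(Q_0) = 3$ and $\psi_5(Q_0) > 0$, contradicting Proposition~\ref{PropQ} together with Corollary~\ref{CorRQ2}. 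The moral is that these six arcs satisfy all the combinatorial identities you list, but violate the curve constraints (propagation via Proposition~\ref{PropQ}, or Noether), which are strictly stronger than the bare arc axioms.
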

        \begin{proof}
            First, recall the definition of  $k_0 = \min\{i\ |\ a_i \neq 0\}$. Also, by Proposition \ref{q7k0neq1} and Corollary \ref{q7a03}, it is enough to prove that $k_0 \leq 1$. 
            
            By Lemma \ref{LemaArc}, we have that $\mathcal{X}(\mathbb{F}_7) \subseteq \mathbb{P}^2(\mathbb{F}_7)$ is a $(36, 6)$-arc. Up to projective equivalence, there are exactly 194 $(36, 6)$-arcs in $\mathbb{P}^2(\mathbb{F}_7)$, see \cite[Remark 1]{ArcsPG27}. The full list can be found online at \url{http://mars39.lomo.jp/opu/36_3_30.txt}, where the points of such arcs are arranged as a generator matrix for a $[36, 3, 30]_7$-code together with the weight enumerator. Recall that, by the relation between projective $[n, 3, d]_q$-codes and $(n, n - d)$-arcs in $\mathbb{P}^2(\mathbb{F}_q)$, we have
            $$6 \cdot (a_0, a_1, a_2, a_3, a_4, a_5, a_6) = (c_{36}, c_{35}, c_{34}, c_{33}, c_{32}, c_{31}, c_{30}).$$
            By Lemma \ref{Lemmak_0Q4}, $k_0 \leq 3$. Hence, from our initial observation, it is enough to prove that the condition $k_0 \in \{2, 3\}$ leads to a contradiction, equivalently, the exactly six $(36, 6)$-arcs of the $[36, 3, 30]_7$-codes such that the weight of a codeword is at most $34$ is not projectively equivalent to $\mathcal{X}(\mathbb{F}_7)$.
            
            Suppose that $k_0 = 3$. Since there is only one $[36, 3, 30]_7$-code such that the maximal weight of a codeword is exactly 33, then there is only one $(36, 6)$-arc with $k_0 = 3$, up to projective equivalence. So we can choose coordinates $X, Y, Z$ of $\mathbb{P}^2$ such that
            \begin{align*}
                \mathcal{X}(\mathbb{F}_7) = \{ & (1:1:3), (1:1:5), (1:2:3), (1:2:5), (1:2:6), (1:3:3), \\
                & (1:3:4), (1:3:5), (1:3:6), (1:3:0), (1:4:2), (1:4:4), \\
                & (1:4:5), (1:4:6), (1:4:0), (1:5:2), (1:5:3), (1:5:4), \\
                & (1:5:6), (1:5:0), (1:6:1), (1:6:3), (1:6:4), (1:6:5), \\
                & (1:6:0), (1:0:1), (1:0:2), (1:0:4), (1:0:6), (1:0:0), \\ 
                & (0:1:3), (0:1:4), (0:1:5), (0:1:6), (0:1:0), (0:0:1) \}.
            \end{align*}
            Let
            $$G = YZ(Z - 3Y)(Z - 4Y)(Z - 5Y)(Z - 6Y) \in \mathbb{F}_7[X, Y, Z]$$
            $$H = X(X + Y - Z)(2X + Y - Z)(X + 2Y - 2Z) \in \mathbb{F}_7[X, Y, Z]$$
            A direct computation shows that $\#(\VarP(G) \cap \VarP(H)) = 24$ and $ \VarP(G) \cap \VarP(H) \subseteq \mathcal{X}(\mathbb{F}_7)$. Let $F \in \mathbb{F}_7[X, Y, Z]$ be a homogeneous equation for $\mathcal{X}$ over $\mathbb{F}_7$, by Noether's “AF + BG” Theorem \ref{Noether}, we have
            $$F = G + (\alpha_1X^2 + \alpha_2Y^2 + \alpha_3Z^2 + \alpha_4XY + \alpha_5XZ + \alpha_6YZ) \cdot H$$
            for some $\alpha_1, ..., \alpha_6 \in \mathbb{F}_7$. Since $0 = F(1, 0, 0) = 2\alpha_1$,
            \begin{align*}
                0  = F(1, 5, 4) & = G(1, 5, 4) + (5^2\alpha_2 + 4^2\alpha_3 + 5 \alpha_4 + 4 \alpha_5 4 + 20 \alpha_6)H(1, 5, 4) \\
                & = 4(4\alpha_2 + 2\alpha_3 + 5 \alpha_4 + 4 \alpha_5 + 6 \alpha_6), \\
                0  = F(1, 4, 0) & = G(1, 4, 0) + (4^2\alpha_2 + 4\alpha_4)H(1, 4, 0) \\
                & = 4(2\alpha_2 + 4\alpha_4),\\
                0  = F(1, 0, 6) & = G(1, 0, 6) + (6^2\alpha_3 + 6\alpha_5)H(1, 0, 6) \\
                & = 4(\alpha_3 + 6\alpha_5), \\
                0  = F(1, 2, 5) & = G(1, 2, 5) + (2^2\alpha_2 + 5^2\alpha_3 + 2\alpha_4 + 5\alpha_5 + 10\alpha_6)H(1, 2, 5) \\
                & = 4(4\alpha_2 + 4\alpha_3 + 2\alpha_4 + 5\alpha_5 + 3\alpha_6), \\
                0  = F(1, 6, 4) & = G(1, 6, 4) + (6^2\alpha_2 + 4^2\alpha_3 + 6\alpha_4 + 4\alpha_5 + 24\alpha_6YZ)H(1, 6, 4) \\
                & = 4(\alpha_2 + 2\alpha_3 + 6\alpha_4 + 4\alpha_5 + 3\alpha_6YZ),
            \end{align*}
            It is easy to check that this implies that $\alpha_1 = \cdots = \alpha_6 = 0$. Therefore, $F = G$ and $\mathcal{X}$ has $\mathbb{F}_7$-linear components, a contradiction. 
            
            Suppose that $k_0 = 2$. Since there are five $[36, 3, 30]_7$-codes such that the maximal weight of a codeword is exactly 34, then there are five $(36, 6)$-arc with $k_0 = 2$, up to projective equivalence. For instance, we can choose coordinates $X, Y, Z$ of $\mathbb{P}^2$ such that
            \begin{align*}
                \mathcal{X}(\mathbb{F}_7) = \{ & (1:1:4), (1:1:5), (1:1:6), (1:1:0), (1:2:2), (1:2:4), \\
                & (1:2:6), (1:2:0), (1:3:2), (1:3:3), (1:3:4), (1:3:5), \\
                & (1:4:2), (1:4:3), (1:4:5), (1:4:6), (1:5:3), (1:5:5), \\
                & (1:5:6), (1:5:0), (1:6:1), (1:6:2), (1:6:4), (1:6:6), \\
                & (1:6:0), (1:0:2), (1:0:3), (1:0:4), (1:0:5), (1:0:0), \\ 
                & (0:1:3), (0:1:4), (0:1:5), (0:1:6), (0:1:0), (0:0:1) \}.
            \end{align*}
            Let $Q_0 = (1: 5: 2) \in Z(\mathcal{X})$. A direct computation shows that 
            $$\check{Q_0}(\mathbb{F}_7) \cap \mathcal{A}_5 = \{ x + 3y + 6z, x + 4y\} \text{ and } \check{Q_0}(\mathbb{F}_7) \cap \mathcal{A}_6 = \{ y + z, 4x +3y + z, x + 3z \}.$$
            Since $\psi_6(Q_0) = 3$, by Corollary \ref{CorRQ2} and Proposition \ref{PropQ}, we must have $\psi_5(Q_0)  = 0$, a contradiction. 
            
            The other four $(36, 6)$-arc with $k_0 = 2$ can be dealt with by using a similar argument. In each of these cases, we have a contradiction.
            Therefore, none of these $(36, 6)$-arcs can be equal to $\mathcal{X}(\mathbb{F}_7)$, and our assertion follows. 
        \end{proof}
        
        \begin{rem}
            As a byproduct of Proposition \ref{q=7}, we have that, of the 194 nonequivalent $(36,6)$-arcs in $\mathbb{P}^2(\mathbb{F}_7)$, only one is obtained as the set of rational points of an irreducible plane curve of degree $6$. 
        \end{rem}
        
        We are now in a position to prove our main result. 
        
        \begin{theorem}\label{MainResult}
             Let $\mathcal{X} \in C_{q - 1}(\mathbb{F}_q)$. If $\text{N}_q(\mathcal{X}) = (q - 1)^2$ and $q \geq 5$, then there exist $\alpha, \gamma, \beta \in \mathbb{F}_q^{*}$ with $\alpha + \beta + \gamma = 0$ such that $\mathcal{X} \simeq_{{\rm proj}} \mathcal{C}_{(\alpha, \beta, \gamma)}$ over $\mathbb{F}_q$.
        \end{theorem}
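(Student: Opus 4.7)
The cases $q = 5$ and $q = 7$ are handled by Propositions \ref{q=5} and \ref{q=7}, respectively, so throughout we assume $q \geq 8$. The plan is to show that $Z(\mathcal{X}) = (l_1 \cup l_2 \cup l_3)(\mathbb{F}_q)$ for three non-concurrent $\mathbb{F}_q$-lines $l_1, l_2, l_3$. Once this is done, the $3$-transitivity of $\PGL(3, \mathbb{F}_q)$ on triples of non-concurrent lines lets us bring these lines to $\VarP(X), \VarP(Y), \VarP(Z)$ via some projectivity $\varphi_A$; then $Z(\varphi_A(\mathcal{X}))$ is exactly the set of $\mathbb{F}_q$-rational points of the three coordinate axes, and Proposition \ref{Z(X)XYZ} immediately yields $\varphi_A(\mathcal{X}) = \mathcal{C}_{(\alpha, \beta, \gamma)}$ for some $\alpha, \beta, \gamma \in \mathbb{F}_q^{*}$ with $\alpha + \beta + \gamma = 0$.

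It suffices to prove $a_0 \geq 1$. Indeed, Corollary \ref{q7a03} then boosts this to $a_0 = 3$, and the three $0$-lines $l_1, l_2, l_3$ satisfy $\bigcup_i l_i(\mathbb{F}_q) \subseteq Z(\mathcal{X})$; since $|Z(\mathcal{X})| = q^2 + q + 1 - (q-1)^2 = 3q$, while any three distinct concurrent lines cover $3q + 1$ rational points and three non-concurrent lines cover exactly $3q$, the three $0$-lines must be non-concurrent and exhaust $Z(\mathcal{X})$. To prove $a_0 \geq 1$ I will show $k_0 = 0$, and by Proposition \ref{q7k0neq1} it suffices to rule out $k_0 \geq 2$.

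Suppose for contradiction $k_0 \geq 2$. Then Corollary \ref{CorR4} forces $\psi_{q-1}(Q) \leq 3$ for every $Q \in Z(\mathcal{X})$, since $\psi_{q-1}(Q) \geq 4$ would make $Q$ of type $(q-1)^{q-1}0^2$, producing $0$-lines and contradicting $k_0 \geq 2$. This yields the global upper bound $2 a_{q-1} = \sum_{Q \in Z(\mathcal{X})} \psi_{q-1}(Q) \leq 9q$. On the other hand, fixing a $k_0$-line $l_0$ and using that $k_0 \in \{2, \ldots, q-4\}$ is neither $q-1$ nor $q-2$, Lemma \ref{LemmaPR3} forces $\psi_{q-1}(P) \geq 4$ for each $P \in l_0 \cap \mathcal{X}$ (otherwise the type $(q-1)^3(q-2)^{q-2}$ would only admit $(q-1)$- and $(q-2)$-lines through $P$). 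For $k_0$ close to $q-4$, the lower bound $a_{q-1} \geq (3(q-1)^2 - 3k_0)/(q-1-k_0)$ from Lemma \ref{Lemaik0} already contradicts $a_{q-1} \leq 9q/2$. For the remaining smaller values of $k_0$, an additional local refinement is needed: for each $Q \in Z(\mathcal{X})$ with $\psi_{q-1}(Q) = 3$, Corollary \ref{CorRQ2} produces a line $l_\infty^Q$ and Proposition \ref{PropQ} (applied with $t = 3$) forces every non-$(q-1)$-line through $Q$ to have at most $q-3$ points of $\mathcal{X}(\mathbb{F}_q)$, so in particular $\psi_{q-2}(Q) = 0$; combining this line-by-line constraint with the moment identities of Lemma \ref{PropA_i} yields the desired contradiction.

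The hardest step will be the second part of ruling out $k_0 \geq 2$: for small values of $k_0$ (notably $k_0 = 2, 3$) the global counts of Lemmas \ref{PropA_i} and \ref{Lemaik0} alone are too loose, so one must jointly exploit the structural input about type-$(q-1)^3(q-2)^{q-2}$ points on $\mathcal{X}$ together with the vanishing $\psi_{q-2}(Q) = 0$ at $Z$-points with $\psi_{q-1}(Q) = 3$. This is in the same spirit as Proposition \ref{q7k0neq1}, where sharpening the basic arithmetic inequalities with a local type analysis was essential to force the integrality obstruction.
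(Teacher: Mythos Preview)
Your overall architecture matches the paper's: reduce to $k_0 = 0$ via Proposition~\ref{q7k0neq1} and Corollary~\ref{q7a03}, then derive a contradiction from $k_0 \ge 2$ by squeezing $a_{q-1}$ between the lower bound of Lemma~\ref{Lemaik0} and an upper bound coming from $\psi_{q-1}(Q)\le 3$ on $Z(\mathcal{X})$. The ingredients you list for the ``hard step''---type-$(q-1)^3(q-2)^{q-2}$ points and the vanishing $\psi_{q-2}(Q)=0$ at $Z$-points with $\psi_{q-1}(Q)=3$---are correct and are exactly what the paper uses.

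The gap is that you do not explain \emph{how} to combine them, and the combination is not routine. The crude bound $a_{q-1}\le 9q/2$ beats Lemma~\ref{Lemaik0} only for $k_0$ roughly above $(q+2)/3$, so for fixed small $k_0$ (say $2$ or $3$) and growing $q$ you need something sharper. The paper's device is to switch viewpoints: rather than looking at rational points on a $k_0$-line (your approach, which only produces $k_0$ points with $\psi_{q-1}\ge 4$---not enough to control anything), it observes that $a_{q-1}\le 9q/2 < 5(q-1)$ for $q\ge 11$ forces the existence of some $P_0\in\mathcal{X}(\mathbb{F}_q)$ with $\psi_{q-1}(P_0)\in\{3,4\}$. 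The $(q-2)$-lines through such a $P_0$ then hit at least $3(q-2)$ (resp.\ $3(q-4)$) distinct points of $Z(\mathcal{X})$, and each of those, by your own observation $\psi_{q-2}(Q)=0$ when $r_Q=3$, must satisfy $r_Q\le 2$. This sharpens the upper bound to $a_{q-1}\le 3(q+1)$ (resp.\ $3(q+2)$), which now does contradict Lemma~\ref{Lemaik0} for every $k_0\ge 2$. Your sketch never surfaces this pivot from the $k_0$-line to a low-$\psi_{q-1}$ point of $\mathcal{X}$, and without it the plan for ``small $k_0$'' is not a proof.

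A second omission: the argument just described works only for $q\ge 11$ (the inequality $9q/2 < 5(q-1)$ already fails at $q=9$). The paper handles $q=8$ and $q=9$ by a separate and rather delicate case analysis on $k_0\in\{2,3\}$ with tighter numerics; your plan does not flag that a small-$q$ residual treatment is needed at all.
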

        \begin{proof}
            Let $k_0 = \min\{i\ |\ a_i \neq 0\}.$ If $q \ge 8$, then by Proposition \ref{q7k0neq1} and Corollary \ref{CorR4}, it is enough to prove that either $k_0 \leq 1$ or there is a point $Q \in Z(\mathcal{X})$ such that $\psi_{q - 1}(Q) \geq 4$. 
            
            By way of contradiction, assume that $k_0 \geq 2$ and $r_Q := \#\check{Q}(\mathbb{F}_q) \cap \mathcal{A}_{q - 1} \leq 3$ for every point $Q \in Z(\mathcal{X})$. Since $\#Z(\mathcal{X}) = 3q$, the latter hypothesis implies that $2a_{q - 1} \leq 9q$.
            
            Now, assume that $q \geq 11$. If for every point $P \in \mathcal{X}(\mathbb{F}_q)$ we have $\psi_{q - 1}(P) \geq 5$, then $a_{q - 1} \geq 5(q - 1) > 9q/2$, a contradiction. Let $P_0 \in \mathcal{X}(\mathbb{F}_q)$ such that $r_0 = \psi_{q - 1}(P) \leq 4$. By Lemma \ref{LemmaPR3}, we have that $r_0 \in \{3, 4\}$. We distinguish two cases, namely  $r_0 = 3$ or $r_0 = 4$. 
            
            If $r_0 = 3$, then by Lemma \ref{LemmaPR3}, $P_0$ is of type $(q - 1)^3(q - 2)^{q - 2}$. Let $l \in \check{P}(\mathbb{F}_q) \cap \mathcal{A}_{q - 2}$ and $Q \in l \cap Z(\mathcal{X})$. If $r_Q := \psi_{q - 1}(Q) = 3$, by Lemma \ref{PropQ} and Corollary \ref{CorRQ2}, then $l \in \mathcal{A}_{q - 1}$, a contradiction. So there are at least $3(q - 2)$ points in $Z(\mathcal{X})$ such that $r_Q \leq 2$. Therefore, $2a_{q - 1} \leq 2(3(q - 2)) + 3*6$. So $a_{q - 1} \leq 3(q - 2) + 9 = 3(q + 1)$. By Lemma \ref{Lemmak_0Q4}, $k_0 \leq q - 4$ and, by Lemma \ref{Lemaik0}, we have
                $$\dfrac{3(q - 1)^2 - 3k_0}{q - k_0 - 1} \leq a_{q - 1} \leq 3(q + 1).$$
            This implies that $3q(k_0 - 2) + 6 = 3(q - 1)^2 - 3k_0 - (q - k_0 - 1)(3q + 3) \leq 0$. Therefore, $k_0 \leq 2 - 2/q < 2,$ a contradiction. 
            
            Now, suppose that $r_0 = 4$. Let $s = \psi_{q - 2}(P)$. We have
            \begin{align*}
                (q - 1)^2 & \leq 1 + 4(q - 2) + s(q - 3) + (q - 3 - s)(q - 4) \\
                & = 4q - 7 + sq - 3s + q^2 - 3q - sq - 4q + 12 + 4s \\
                & = (q - 1)^2 + 4 + s - q.
            \end{align*}
            This implies that $s \geq q - 4$, hence $P$ is of type $(q - 1)^4(q - 2)^{q - 4}(q - 5)^1.$ As in the previous case, there are at least $3(q - 4)$ points in $Z(\mathcal{X})$ such that the image by $\psi_{q - 1}$ is less than or equal to $2$. Therefore, $2a_{q - 1} \leq 2(3(q - 4)) + 3 \cdot 12$, hence, $a_{q - 1} \leq 3(q - 4) + 18 = 3(q + 2)$. Since $q \geq 11$, there is point $P \in \mathcal{X}(\mathbb{F}_q)$ such that $\psi_{q - 1}(P) = 3$, otherwise, $a_{q - 1} \geq 4(q - 1) > 3(q + 2).$ Again, this implies that $k_0 < 2$, a contradiction.
                
            We are then left with the cases $q = 5,7, 8, 9$. As the cases $q=5,7$ have already been dealt with in Propositions \ref{q=5} and \ref{q=7} respectively, we only need to consider the cases  $q =8,9$.  We proceed with a careful case-by-case analysis.  
             
            For $q = 9$, by Lemma \ref{Lemaik0}, we have
            \begin{align}\label{a8}
            	\dfrac{3(64 - k_0)}{8 - k_0} \leq a_8 \leq 40.
            \end{align}
            By Lemma \ref{Lemmak_0Q4}, we have $k_0 \leq 5$, and  then $37k_0 - 128 = 3(64 - k_0) - 40(8 - k_0) \leq 0$. This in turn implies $k_0 \in \{ 2, 3 \}$. 
            
            Now, we prove that the condition $k_0 = 2$ leads to a contradiction for $q = 9$. If $k_0 = 2$, by the inequality (\ref{a8}), $a_8 \geq 31$. Let $l_2 \in \mathcal{A}_2$ with $l_2 \cap \mathcal{X}(\mathbb{F}_9) = \{ P_0, P_1\}$ and $l_2 \cap Z(\mathcal{X}) = \{ P_2, ..., P_9\}.$ Let $r_j := \psi_{8}(P_j)$ for $j = 0, 1, 2, ..., 9$. By Lemma \ref{LemmaType}, if $j \in \{0, 1\}$, then $64 \leq 2 + 7r_j + 6(9 - r_j) = 56 + r_j.$ Since $2 + 7r_j \leq 64$, this implies that $r_j = 8$ for $j = 0, 1$. If $r_j = 3$ for some $j \in \{2, ..., 9\}$, follows from Lemma \ref{LemmaType} and Proposition \ref{PropQ} that the other lines in $\check{Q}_j(\mathbb{F}_q)$ contain at most $6$ points of $\mathcal{X}(\mathbb{F}_9)$, hence, $64 \leq  2 + 3\cdot8 + 6\cdot6 = 62,$ a contradiction. Then $r_j \leq 2$ when $j = 2, ..., 9$. This implies that $a_8 \leq 2\cdot8 + 8\cdot 2 = 32$. If $r_P = \psi_8(P) \geq 4$ for every point $P \in \mathcal{X}(\mathbb{F}_q) \backslash \{P_1, P_2\}$, then
            $$a_8 \geq \dfrac{8\cdot2 + 4((q - 1)^2 - 2)}{q - 1} = 33,$$
            a contradiction. Then there is a point $P \in \mathcal{X}(\mathbb{F}_q)$ such that $r_P = 3$. Again, this implies that $a_{8} \leq 3(q + 1) = 30$, a contradiction.
            
            We now prove that the condition $k_0 = 3$ leads to a contradiction for $q = 9$. If $k_0 = 3$, by inequality (\ref{a8}), $a_8 \geq 37$. Let $l_3 \in \mathcal{A}_3$ with $l_3 \cap \mathcal{X}(\mathbb{F}_9) = \{ P_0, P_1, P_2\}$ and $l_3 \cap Z(\mathcal{X}) = \{ P_3, ..., P_9\}.$ Let $r_j := \psi_{8}(P_j)$ where $i = 0, 1, 2, ..., 9$. By Lemma \ref{LemmaType} if $j \in \{0, 1, 2\}$, then $64 \leq 3 + 7r_j + 6(9 - r_j) = 57 + r_j.$ Since $3 + 7r_j \leq 64$, this implies that $r_j \in \{7, 8\}$ for $j = 0, 1, 2$. If $r_j = 3$ for some $j \in \{3, ..., 9\}$, it follows from Lemma \ref{LemmaType} and Proposition \ref{PropQ} that the other lines in $\check{Q}_j(\mathbb{F}_q)$ contain at most $6$ points of $\mathcal{X}(\mathbb{F}_9)$, hence, $64 \leq 3 + 3\cdot 8 + 6\cdot 6 = 63,$ a contradiction. Then $r_j \leq 2$ when $j = 3, ..., 9$. This implies that $a_8 \leq 3\cdot8 + 7\cdot2 = 38$. If $r_P = \psi_{8}(P) \geq 5$ for every point $P \in \mathcal{X}(\mathbb{F}_q)$, then $a_8 \geq 5(q - 1) = 40,$ a contradiction.  There is then a point $P \in \mathcal{X}(\mathbb{F}_q)$ such that $\psi_{8}(P) \leq 4$. Again, this implies that $a_8 \leq 3(q + 2) = 33$, a contradiction.
            
            For $q = 8$, by Lemma \ref{Lemaik0}, we have
            \begin{align}\label{a7}
            	\dfrac{3(49 - k_0)}{7 - k_0} \leq a_{7} \leq 36
            \end{align}
            By Lemma \ref{Lemmak_0Q4}, we know that $k_0 \leq 4$. Then $3(11k_0 - 35) = 3(49 - k_0) - 36(7 - k_0) \leq 0.$ Hence, $k_0 \in \{ 2, 3 \}$.
            
            We are then left with the case $q= 8$. As in the previous case, we prove that the condition $k_0 = 2$ leads to a contradiction. If $k_0 = 2$, by the  inequality (\ref{a7}), $a_{7} \geq 29.$
            Let $l_2 \in \mathcal{A}_2$ with $l_2 \cap \mathcal{X}(\mathbb{F}_8) = \{ P_0, P_1\}$ and $l_2 \cap Z(\mathcal{X}) = \{ P_2, ..., P_8\}.$ Let $r_j := \psi_{7}(P_j)$ where $j = 0, 1, 2, ..., 8$. By Lemma \ref{LemmaType} if $j \in \{0, 1\}$, then $49 \leq 2 + 6r_j + 5(8 - r_j) = 42 + r_j.$ Since $2 + 6r_j \leq 49$, then $r_j = 7$ for $i = 0, 1$. If $r_j = 3$ for some $j = 2, ..., 8$, by Lemma \ref{LemmaType} and Proposition \ref{PropQ}, the other lines in $\check{Q}_j(\mathbb{F}_q)$ contain at most $5$ points of $\mathcal{X}(\mathbb{F}_8)$. We thus obtain that  $49 \leq 2 + 3\cdot7 + 5\cdot5 = 48,$ a contradiction. Hence, $r_j \leq 2$ when $j = 2, ..., 8$. Therefore $a_{7} \leq 2\cdot7 + 7\cdot2 = 28$, a contradiction.
            
            Finally, we prove that the condition $k_0 = 3$ leads to a contradiction for $q = 8$. If $k_0 = 3$, by the inequality (\ref{a7}), $a_7 \geq 35$. Let $l_3 \in \mathcal{A}_3$ with $l_3 \cap \mathcal{X}(\mathbb{F}_8) = \{ P_0, P_1, P_2\}$ and $l_2 \cap Z(\mathcal{X}) = \{ P_3, ..., P_8\}$.  Let $r_j := \psi_{7}(P_j)$ where $i = 0, 1, 2, ..., 8$. By Lemma \ref{LemmaType}, if $j \in \{0, 1, 2\}$ then $49 \leq 3 + 6r_j + 5(8 - r_j) = 43 + r_j.$ Since $3 + 6r_j \leq (q - 1)^2 = 49$, then $r_j \in \{6, 7\}$ for $i = 0, 1, 2$. If $r_j = 3$ for some $j = 2, ..., 8$, by Lemma \ref{LemmaType} and Proposition \ref{PropQ} the other lines in $\check{Q}_j(\mathbb{F}_q)$ contain at most $5$ points of $\mathcal{X}(\mathbb{F}_8)$, then $49 \leq 3 + 3\cdot7 + 5\cdot5 = 49.$ This means that the other $5$ lines in $\check{Q}_j$ are in $\mathcal{A}_{5}$. By Lemma \ref{Lemaik0}, we have $- 2a_4 - 2a_5 + 4a_7 = 138$. This implies that $4a_7 \geq 138 + 2a_5 \geq  138 + 10 = 148$, hence $a_7 \geq 37$, a contradiction. So $r_j \leq 2$ when $j = 2,\ldots, 8$. Therefore, $a_{7} \leq 3\cdot7 + 6\cdot2 = 33$, a contradiction. The proof of our theorem is then completed. 
            
            \end{proof}
            
\section{Final remarks}
\subsection{On $\mathbb{F}_q$-Frobenius classical curves with many points}\label{SectionSV}

Let $\mathcal{C} \subseteq \mathbb{P}^2$ be an irreducible nonsingular algebraic curve of degree $d$ defined over $\mathbb{F}_q$. If $\mathcal{C}$ is $q$-Frobenius classical, by Theorem \ref{babysv}, we have
\begin{equation}\label{SVBound}
    \text{N}_q(\mathcal{C}) \leq \dfrac{d(d + q - 1)}{2}.
\end{equation}
Note that if $d = q - 1$ then $d(d + q - 1)/2 = (q - 1)^2$. This means that the Stöhr-Voloch upper bound for a nonsingular $q$-Frobenius classical plane curve of degree $q - 1$ is equal to Sziklai's upper bound.
Also, by the proof of our main result, it is inferred that the curves attaining the Sziklai bound are $q$-Frobenius classical and have no $\mathbb{F}_q$-rational point of inflection. In other words, Theorem \ref{MainResult} classifies the $\mathbb{F}_q$-Frobenius classical curves of degree $q-1$ attaining the Stöhr-Voloch upper bound (\ref{SVBound}) up to projective equivalence. 
\subsection{On hypersurfaces with many rational points}

In \cite{HK2017}, Homma and Kim gave an upper bound
for the number $\text{N}_q(\mathcal{X})$ of $\mathbb{F}_q$-rational points of a nonsingular hypersurface $\mathcal{X}$ defined over $\mathbb{F}_q$ in an odd-dimensional projective space $\mathbb{P}^n$, namely
\begin{equation}\label{HKodd}
    \text{N}_q(\mathcal{X}) \leq \theta_q\left(m\right)((d - 1)q^{m} + 1), 
\end{equation}
where $m = (n - 1)/2$ and $\theta_q(m) := |\mathbb{P}^m(\mathbb{F}_q)| = q^m + \cdots + q + 1$.
In \cite[Theorem 1.1]{HK2017}, they also characterized, up to projective equivalence, the hypersurfaces attaining the bound (\ref{HKodd}).

In the same paper, they also conjectured the following for the even-dimensional case: if $\mathcal{X} \subseteq \mathbb{P}^n$ is a nonsingular hypersurface defined over $\mathbb{F}_q$ of degree $d$ with $n$ even, then
\begin{equation}\label{HKeven}
\text{N}_q(\mathcal{X}) \leq \Theta_n^{d, q} := \theta_q\left(m - 1\right)((d - 1)q^{m} + 1)    
\end{equation}
where $m = n/2$. 

This conjecture was then proved by Datta in the case $n= 4$ \cite{Datta2009}, and by Tironi \cite{Tironi2022} in the general case. Here, surprisingly, a link with curves that are optimal with respect to the Sziklai bound appears when considering hypersurfaces attaining \eqref{HKeven}.   In fact,  let $\mathcal{X}$ be a hypersurface in $\mathbb{P}^4$  attaining the bound (\ref{HKeven}); then by \cite[Theorem 4.8]{Datta2009} there exists a point $P \in \mathcal{X}(\mathbb{F}_q)$ such that $\mathcal{X} \cap \mathbb{T}_{P}(\mathcal{X})$ is a cone, with center at $P$, over a plane curve $\mathcal{C}$ of degree $d$ defined over $\mathbb{F}_q$ without $\mathbb{F}_q$-linear components and $\text{N}_q(\mathcal{C})$ attains the Sziklai bound. Also, by \cite[Theorem 1]{Tironi2022}, this curve must be nonsingular. For $n \geq 6$, we have an analogous result; by \cite[Theorem 2]{Tironi2022} the bound (\ref{HKeven}) is attained by a  nonsingular hypersurface $\mathcal{X} \subseteq \mathbb{P}^n$ defined over $\mathbb{F}_q$ of degree $d$ with $n \geq 6$ even only if there exists a point $P \in \mathcal{X}(\mathbb{F}_q)$ such that $\mathcal{X} \cap \mathbb{T}_{P}(\mathcal{X})$ is a cone, with center at $P$, over a nonsingular hypersurface $\mathcal{Y} \subseteq \mathbb{P}^{n - 2}$ of degree $d$ defined over $\mathbb{F}_q$ without $\mathbb{F}_q$-linear components and $\text{N}_q(\mathcal{Y}) = \Theta_{n - 2}^{d, q}$. 
Therefore, the extremal hypersurfaces in even dimension can be characterized inductively starting with the ones in $\mathbb{P}^4$, which in turn can be constructed from optimal Sziklai curves. This gives a possible application of Theorem \ref{MainResult}. 

It should be noted, however, that it is still unknown whether all hypersurfaces with a point $P$ such that is a cone, with center at $P$, over a nonsingular hypersurface $\mathcal{Y} \subseteq \mathbb{P}^{n - 2}$ of degree $d$ defined over $\mathbb{F}_q$ without $\mathbb{F}_q$-linear components and $\text{N}_q(\mathcal{Y}) = \Theta_{n - 2}^{d, q}$ attains the bound \eqref{HKeven}.

\end{document}